\documentclass[12pt]{amsart}
\usepackage{amscd}
\usepackage{mathrsfs}
\usepackage{a4wide}
\usepackage{color}
\usepackage{amssymb,amsmath,amsthm,amsfonts,latexsym}
\usepackage[utf8x]{inputenc}
\usepackage{bbm} 

\usepackage{amsmath,amsfonts,amssymb,amsthm} 
\usepackage{tikz}
\usepackage{hyperref}
\usepackage[all]{xy}

\newtheorem{theorem}{Theorem}[section]
\newtheorem{corollary}[theorem]{Corollary}
\newtheorem{lemma}[theorem]{Lemma}
\newtheorem{claim}[theorem]{Claim}

\newtheorem{ejem}[theorem]{Example}
\newtheorem{observation}[theorem]{Observation}

\newtheorem{proposition}[theorem]{Proposition}
\newtheorem{definition}[theorem]{Definition}
\newtheorem{question}[theorem]{Question}

\newcommand{\fin}{\mathbf{Fin}}
\newcommand{\B}{\mathcal{B}}
\newcommand{\C}{\mathcal{C}}
\newcommand{\I}{\mathcal{I}}

\newcommand{\F}{\mathcal{F}}

\newcommand{\fsig}{F_\sigma}
\newcommand{\cantor}{2^\mathbb{N}}
\newcommand{\baire}{\mathbb{N}^\mathbb{N}}

\def\finN{ [\mathbb{N}]^{<\omega}}
\def\infiN{[\mathbb{N}]^{\omega}}

\def\leukfrac#1/#2{\leavevmode
               \kern.1em
                \raise.9ex\hbox{\the\scriptfont0 ${}_#1$}
                \hskip -1pt\kern-.1em
                /\kern-.15em\lower.10ex\hbox{\the\scriptfont0 ${}_#2$}}

\makeatletter
\def\diam{\mathop{\operator@font diam}\nolimits}
\makeatother

\title{Bases and  selectors for  tall families}

\begin{document}

\author{Jan Greb\'ik}
\address{Institute of Mathematics\\
Academy of Sciences of the Czech Republic\\
\v Zitn\'a 609/25  \\
 110 00 Praha 1-Nov\'e M\v esto, Czech Republic}\email{grebik@math.cas.cz}

\author{Carlos Uzc\'ategui}
\address{Escuela de Matem\'aticas, Universidad Industrial de Santander, Cra. 27 Calle 9 UIS Edificio 45\\  Bucaramanga, Colombia}\email{cuzcatea@saber.uis.edu.co}

\thanks{{The first-listed author was supported by the GACR project 15-34700L and RVO:
67985840.}{}}

\begin{abstract}
We show that the Nash-Williams theorem has a uniform version and that the Galvin theorem does not. We show that there is an $F_\sigma$ tall ideal on $\mathbb{N}$ without a Borel selector and also construct a $\mathbf\Pi^1_2$ tall ideal on $\mathbb{N}$ without a tall closed  subset. 
\end{abstract}

\maketitle

\section{Introduction}

A family  $\mathcal{C}$ of subsets of $\mathbb{N}$ is tall if for every infinite $x\subseteq \mathbb{N}$ there is an infinite $y\in \mathcal{C}$ such that $y\subseteq x$.
We are interested in tall families $\mathcal{C}$ which are in addition definable as subsets of $2^{\mathbb{N}}$.
Take for example the set $hom(c)$ of all monochromatic subsets of $\mathbb{N}$ for some coloring $c:[\mathbb{N}]^2\to 2$.
This is, by Ramsey theorem, a tall family and moreover it is a closed subset of $\cantor$.
We deal with the question when we can effectively witness that a family $\mathcal{C}$ is tall i.e. when there is a Borel function $S:2^{\mathbb{N}}\to 2^{\mathbb{N}}$ such that for every infinite $x\in 2^{\mathbb{N}}$ is $S(x)\in \mathcal{C}$, $S(x)$ is infinite and $S(x)\subseteq x$.
We call such function $S$ a Borel selector for $\mathcal{C}$.
Note that if there is a Borel selector $S$ for $\mathcal{C}$ then $\mathcal{C}$ contains analytic subfamily which is also tall.
This leads to a natural basis problem of whether a given tall family $\mathcal{C}$ contains simpler tall subfamily $\mathcal{C}'\subseteq \mathcal{C}$.
By simpler we mean that $\mathcal{C}'$ is of lower complexity (for example closed) or is of specific form (for example $hom(c)$ for some coloring $c$). 

The main source of examples of tall families of subsets of $\mathbb{N}$ are tall Borel ideals on $\mathbb{N}$.
Recall that an ideal $\mathcal{I}$ is Kat\v etov below an ideal $\mathcal{J}$ ($\mathcal{I}\le_K \mathcal{J}$) if there is a function $f:\mathbb{N}\to\mathbb{N}$ such that $f^{-1}[x]\in \mathcal{J}$ for every $x\in \mathcal{I}$.
It is proved in \cite{GrebikHrusak2017} that having a Borel selector is closed upwards in the Kat\v etov order and if $\mathcal{I}$ is a tall Borel ideal with a Borel selector then there is a tall Borel ideal $\mathcal{J}$ such that $\mathcal{I}\not\le_{K}\mathcal{J}$.
All known examples of Borel ideals so far have a Borel selector (for random ideal $\mathcal{R}$ see \cite{HMTU2017} and for Solecki ideal $\mathcal{S}$ see \cite{GrebikHrusak2017}). We show that there is a $F_\sigma$ tall ideal without a Borel selector. 
The proof of this result is based on the following facts. Every $F_\sigma$ ideal can be coded by a closed collection of sets, i.e. by an element of the hyperspace $K(2^{\mathbb{N}})$. 
In \cite{GrebikHrusak2017} it is proved that the set of codes of tall $F_\sigma$ ideals is a $\mathbf\Pi^1_2-$complete subset of $K(\cantor)$.
To show that there is an $F_\sigma$ ideal without a selector we prove that the complexity of the set of codes of $F_\sigma$ ideals with a Borel selector is $\mathbf\Sigma^1_2$. However, it is an open question to find a concrete example of such  $F_\sigma$ ideal.

Another important source of examples are given by some well studied generalizations of $hom(c)$. Given a subset $\mathcal{O}$ of infinite subsets of $\mathbb{N}$, a set $x\subseteq \mathbb{N}$ is called $\mathcal{O}$-homogeneous, if either $[x]^\omega \subseteq \mathcal{O}$ or   $[x]^\omega \cap \mathcal{O}=\emptyset$. A well known theorem of Silver says that for every analytic  set $\mathcal{O}$ the collection $hom(\mathcal{O})$ of $\mathcal{O}$-homogeneous sets is tall.  When $\mathcal{O}$ is open (resp. clopen), the corresponding Ramsey result is called  Galvin's lemma \cite{GalvinPrikry} (resp. Nash-Williams' theorem \cite{nash-williams68}).  The existence of Borel selectors for families of the form $hom(\mathcal{O})$ is a consequence of the fact  that the corresponding Ramsey theorem holds uniformly. For instance, the fact that the Random ideal $\mathcal{R}$ has a Borel selector is due to the fact there is uniform approach of finding an infinite monochromatic subset of a given set $x\subseteq \mathbb{N}$ (or having a Borel proof of Ramsey theorem) \cite{HMTU2017}. Analogously, we show that Nash-Williams' theorem also has a uniform version and thus $hom(\mathcal{O})$ has a Borel selector for every clopen set $\mathcal{O}$. However,  we show there is an open set $\mathcal{O}$ such that $hom(\mathcal{O})$ does not have a Borel selector and therefore  Galvin's lemma does not admit a uniform version.  Ramsey  type theorems have been analyzed from a related but different complexity point of view. Solovay (\cite{Solovay78}) studied when $hom(\mathcal{O})$ contains an element which is hyperarithmetical on the code of  $\mathcal{O}$ (see also \cite{avigad}). 

We show that the basis problem also has a negative answer.  We construct a $\mathbf\Pi^1_2$ tall ideal $\I$ such that $hom(\mathcal{O})\not\subseteq \I$ for all open set $\mathcal{O}\subseteq \infiN$, in particular, $\I$ does not contain any tall closed subset. It is still an open question whether every tall Borel (analytic) ideal contains a closed tall subset.

\section{Preliminaries}
In this section we fix our notation, give some basic definitions and results that are later used.
We consider the natural isomorphism $\mathcal{P}(\mathbb{N})\approx 2^{\mathbb{N}}$ and view all  relations such as $\subseteq,\cap, [\_]^{<\omega}$, etc, as defined on $2^{\mathbb{N}}$ i.e. we use  $x\subseteq y$,  $x\cap y, [x]^{<\omega}$, etc, for $x,y\in 2^\mathbb{N}$. We use the standard descriptive set theoretic notions and notations (as in \cite{Kechris94}). The projective classes are denoted $\mathbf \Sigma^1_n$ and $\mathbf \Pi^1_n$. 

\begin{definition}
Let $\mathcal{C}\subseteq 2^{\mathbb{N}}$ be a tall family.
We say that $\mathcal{C}$ has a Borel selector, if there is a Borel function $S:2^{\mathbb{N}}\to 2^{\mathbb{N}}$ such that for every $x\in 2^{\mathbb{N}}$ 
\begin{itemize}
\item $S(x)\subseteq x$,
\item if $|x|$ is infinite then $|S(x)|$ is infinite,
\item $S(x)\in \mathcal{C}$.
\end{itemize}
\end{definition}
Note that we define the notion of a Borel selector only for tall families so if we say that $\mathcal{C}$ has a Borel selector it automatically means that $\mathcal{C}$ is tall.
We say that a family $\mathcal{C}$ is hereditary if $y\in\mathcal{C}$ whenever there is $x\in\mathcal{C}$ such that $y\subseteq x$. We say that $\mathcal{I}\subseteq 2^{\mathbb{N}}$ is an ideal on $\mathbb{N}$ if it is hereditary and it is closed under finite unions.
The following characterization of $F_\sigma$ ideals on $\mathbb{N}$ was given by Mazur \cite{mazur}. 
Recall that a map $\varphi:2^{\mathbb{N}}\to [0,\infty]$ is a \emph{lower-semicontinuous submeasure (lcsms)} if for all $x,y\in \mathbb{N}$
\begin{itemize}
\item $\varphi(\emptyset)=0$,
\item $x\subseteq y$ implies $\varphi(x)\le \varphi(y)$,
\item $\varphi(x\cup y)\le \varphi(x)+\varphi(y)$,
\item $\varphi(x)=\lim_{n\to\infty}\varphi(x\cap n)$.
\end{itemize}
Each lcsms $\varphi$ naturally corresponds to the  $F_\sigma$ ideal $Fin(\varphi):=\{x:\varphi(x)<\infty\}$.

\begin{theorem}[Mazur \cite{mazur}]
An ideal $\mathcal{I}$ is $F_\sigma$ if and only if there is lcsms $\varphi$ such that $\mathcal{I}=Fin(\varphi)$.
\end{theorem}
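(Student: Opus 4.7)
The plan is to verify both directions of the equivalence, spending most effort on the nontrivial direction. The easy implication ($\Leftarrow$) follows once one observes that $Fin(\varphi)=\bigcup_{n\in\mathbb{N}}\{x:\varphi(x)\le n\}$, and that the fourth clause in the definition of an lcsms (together with monotonicity) is precisely the statement that $\varphi$ is lower semicontinuous on $2^{\mathbb{N}}$, so each sublevel set is closed and $Fin(\varphi)$ is $F_\sigma$.

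For the nontrivial direction I would first massage the presentation of $\mathcal{I}$. Write $\mathcal{I}=\bigcup_{n}F_n$ with $F_n$ closed, and replace each $F_n$ by its hereditary closure $\hat F_n:=\{y:\exists x\in F_n\,(y\subseteq x)\}$. This is closed: it is the image under the projection $(x,y)\mapsto y$ of the closed set $\{(x,y)\in F_n\times 2^{\mathbb{N}}:y\subseteq x\}$, and projections out of a compact space send closed sets to closed sets. By hereditariness of $\mathcal{I}$, each $\hat F_n\subseteq\mathcal{I}$. Similarly, since the map $(x,y)\mapsto x\cup y$ is continuous and $\hat F_n\times \hat F_n$ is compact, the set $\hat F_n+\hat F_n:=\{x\cup y:x,y\in\hat F_n\}$ is closed and contained in $\mathcal{I}$. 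Passing to a subsequence, I obtain an increasing sequence of closed hereditary sets $F_n$ with $F_n+F_n\subseteq F_{n+1}$ and $\bigcup_n F_n=\mathcal{I}$.

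Now I would define $\varphi(\emptyset)=0$, $\varphi(x)=2^n$ where $n$ is the least index with $x\in F_n$ for $x\in\mathcal{I}\setminus\{\emptyset\}$, and $\varphi(x)=\infty$ for $x\notin\mathcal{I}$. The choice of powers of two is the point: if $\varphi(x)=2^n$ and $\varphi(y)=2^m$ with $n\le m$, both belong to $F_m$, hence $x\cup y\in F_{m+1}$, so $\varphi(x\cup y)\le 2^{m+1}\le 2^n+2^m$, giving subadditivity. Monotonicity follows from hereditariness of each $F_n$. The sublevel sets $\{x:\varphi(x)\le c\}$ equal $F_{\lfloor\log_2 c\rfloor}$ (or $\{\emptyset\}$ or $\emptyset$ for very small $c$), so they are closed; combined with $x\cap k\to x$ in $2^{\mathbb{N}}$, this gives $\varphi(x)\le\liminf_k\varphi(x\cap k)$, while monotonicity gives the reverse inequality (and the $x\notin\mathcal{I}$ case is handled by the same closedness argument). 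Thus $\varphi$ is an lcsms with $Fin(\varphi)=\bigcup_n F_n=\mathcal{I}$.

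The main obstacle is the structural preparation of the $F_n$: the whole argument rests on being able to assume they are simultaneously closed, hereditary, and tame under pairwise unions, since only then does a power-of-two weighting produce a subadditive submeasure whose finiteness domain is exactly $\mathcal{I}$. This step relies essentially on the compactness of $2^{\mathbb{N}}$; once it is in place, verifying the four clauses of the definition of an lcsms is routine.
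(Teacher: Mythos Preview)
The paper does not prove Mazur's theorem (it is only cited), so there is nothing to compare against. Your outline follows the standard route and the structural preparation of the $F_n$'s is essentially right, but the subadditivity verification contains a genuine error.

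You assert that if $\varphi(x)=2^n$ and $\varphi(y)=2^m$ with $n\le m$, then $\varphi(x\cup y)\le 2^{m+1}\le 2^n+2^m$. The second inequality is backwards: for $n<m$ one has $2^n+2^m<2^{m+1}$. And this is not a mere typo---the resulting $\varphi$ can genuinely fail subadditivity. Take $F_0=\{\emptyset,\{0\}\}$, $F_1=\{\emptyset,\{0\},\{1\}\}$, and $F_2\supseteq F_1+F_1$ with $\{0,1\}\in F_2\setminus F_1$ (all hereditary, closed, increasing, with $F_n+F_n\subseteq F_{n+1}$). Then $\varphi(\{0\})=1$, $\varphi(\{1\})=2$, but $\varphi(\{0,1\})=4>3=\varphi(\{0\})+\varphi(\{1\})$.

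The standard repair is to arrange the stronger closure $F_n+F_m\subseteq F_{n+m}$: starting from closed hereditary increasing $G_j$ with union $\mathcal{I}$, set
\[
F_n=\bigcup\{G_{n_1}+\cdots+G_{n_k}:n_1+\cdots+n_k\le n\},
\]
a finite union of compact sets, hence closed, hereditary, and contained in $\mathcal{I}$. Then define $\varphi(x)=\min\{n:x\in F_n\}$ (with the convention $\min\emptyset=\infty$); subadditivity is now immediate from $F_n+F_m\subseteq F_{n+m}$, and the rest of your argument (monotonicity, lower semicontinuity via closedness of sublevel sets, $Fin(\varphi)=\mathcal{I}$) goes through unchanged.

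A smaller imprecision: ``passing to a subsequence'' cannot by itself produce $F_n+F_n\subseteq F_{n+1}$, since there is no reason the compact set $\hat F_n+\hat F_n$ is contained in any single $\hat F_m$. One needs an inductive enlargement such as $F_{n+1}=\hat G_{n+1}\cup(F_n+F_n)$, or the construction above.
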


From this characterization one easily deduces (see for example \cite{GrebikHrusak2017}) the following Proposition which allows us to consider $K(2^\mathbb{N})$ the hyperspace of closed subsets of $2^\mathbb{N}$ as a space of codes of $F_\sigma$ ideals.
For $K\in K(2^\mathbb{N})$, let $\mathcal{I}_K$ be ideal generated by $K$, i.e. $x\in \mathcal{I}_K$ if and only if there is $y_0,...,y_{n-1}\in K$ such that $\bigcup_{i< n}y_i\subseteq x$. Clearly, $\I_K$ is $F_\sigma$.

\begin{proposition}\label{prop1}
For every $F_\sigma$ ideal $\mathcal{I}$ there is $K\in K(2^\mathbb{N})$ such that $\mathcal{I}=\mathcal{I}_K$.
\end{proposition}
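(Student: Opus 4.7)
Given an $F_\sigma$ ideal $\mathcal{I}$, I would apply the Mazur theorem recalled above to fix a lower-semicontinuous submeasure $\varphi$ with $\mathcal{I}=Fin(\varphi)$, and take as candidate compact code
\[
K\;:=\;\{y\in 2^{\mathbb N}:\varphi(y)\le 1\}.
\]
The proof then splits into three pieces: (a) $K\in K(2^{\mathbb N})$, (b) $\mathcal{I}_K\subseteq\mathcal{I}$, and (c) $\mathcal{I}\subseteq\mathcal{I}_K$.

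\textbf{The easy parts.} Closedness of $K$ in $2^{\mathbb N}$ is a direct consequence of the LSC axiom: if $y_n\in K$ and $y_n\to y$, then for each fixed $m$ the truncations $y_n\cap m$ coincide with $y\cap m$ for all large $n$, so $\varphi(y\cap m)\le 1$ by monotonicity, and the axiom $\varphi(y)=\lim_m\varphi(y\cap m)$ forces $\varphi(y)\le 1$. The inclusion $\mathcal{I}_K\subseteq\mathcal{I}$ is a one-line consequence of subadditivity and monotonicity: if $x\subseteq\bigcup_{i<n}y_i$ with each $y_i\in K$, then $\varphi(x)\le\sum_{i<n}\varphi(y_i)\le n<\infty$, whence $x\in Fin(\varphi)=\mathcal{I}$.

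\textbf{The hard part and main obstacle.} What remains is $\mathcal{I}\subseteq\mathcal{I}_K$: every $x$ with $\varphi(x)<\infty$ must be contained in a finite union of sets of $\varphi$-value at most $1$. My plan is a greedy decomposition — enumerate $x=\{a_0<a_1<\cdots\}$ and assemble pieces $y_0,y_1,\dots\in K$ by appending the next $a_i$ to the current piece whenever this keeps $\varphi\le 1$, and opening a fresh piece otherwise. The main obstacle is arguing that the procedure terminates in finitely many pieces; since $\varphi$ is only subadditive and not additive, the expected bound "at most $\lceil\varphi(x)\rceil$ pieces" is not automatic. Two refinements make the argument work. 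First, if some singleton $\{n\}\subseteq x$ has $\varphi(\{n\})>1$ then $n$ is not coverable by any element of $K$; this is fixed by enlarging $K$ to $K\cup\{\emptyset\}\cup\{\{n\}:n\in\mathbb N\}$, which remains closed in $2^{\mathbb N}$ and contained in $\mathcal{I}$.

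\textbf{Clean route via a filtration.} Rather than combinatorially bounding the greedy pieces for an arbitrary Mazur submeasure, I would specialise $\varphi$ to the one obtained from an explicit filtration $F_1\subseteq F_2\subseteq\cdots$ of closed hereditary subsets of $\mathcal{I}$ with $F_n+F_m\subseteq F_{n+m}$, where one further arranges $F_n=\{y_1\cup\cdots\cup y_n:y_i\in F_1\}$ (closed as a continuous image of the compact product $F_1^n$ under $\cup$, and hereditary since $F_1$ is). With such a choice one has $\{y:\varphi(y)\le n\}=F_n$ by construction, and $\mathcal{I}_{F_1}=\bigcup_n F_n=Fin(\varphi)=\mathcal{I}$ tautologically; taking $K=F_1$ then gives the required compact code, while Mazur's theorem itself is what guarantees that such a filtration exhausting $\mathcal{I}$ exists in the first place.
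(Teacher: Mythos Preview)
The paper does not actually prove this proposition; it just says the statement follows easily from Mazur's characterization and refers the reader to \cite{GrebikHrusak2017}. So there is no detailed argument to compare against, only the question of whether your own argument closes.

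Your diagnosis of the obstacle is accurate, but the ``clean route'' you propose to get around it is circular. You want a filtration $F_1\subseteq F_2\subseteq\cdots$ of closed hereditary sets exhausting $\mathcal{I}$ with $F_n=\{y_1\cup\cdots\cup y_n:y_i\in F_1\}$. But for a filtration of this shape one has $\bigcup_n F_n=\mathcal{I}_{F_1}$ \emph{by definition}, so asserting that the union is all of $\mathcal{I}$ is precisely the proposition to be proved. Mazur's theorem only hands you a filtration $K_n=\{y:\varphi(y)\le n\}$ satisfying $K_n+K_m\subseteq K_{n+m}$; it does not give $K_n=n\cdot K_1$, and you cannot ``further arrange'' this without already possessing a closed generator of $\mathcal{I}$. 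Concretely, set $\varphi(\emptyset)=0$, $\varphi(y)=1$ for nonempty $y$ contained in the even integers, and $\varphi(y)=2$ otherwise. This is a bounded lscsm with $Fin(\varphi)=2^{\mathbb N}$, yet every set in $K_1\cup\{\{n\}:n\in\mathbb N\}$ contains at most one odd integer, so the ideal it generates omits every set with infinitely many odds. Here $K_2=2^{\mathbb N}\neq K_1=2\cdot K_1$, your greedy procedure produces infinitely many singleton pieces, and neither of your two approaches yields $\mathcal{I}_K=\mathcal{I}$.

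What does work, and is presumably the ``easy deduction'' the paper has in mind, is to let the threshold grow with the minimum of the set. After reducing to the case $\mathbf{Fin}\subseteq\mathcal{I}$, put
\[
K\;=\;\{\emptyset\}\;\cup\;\bigl\{\{n\}:n\in\mathbb N\bigr\}\;\cup\;\bigl\{y:\varphi(y)\le\min y\bigr\}.
\]
This is closed: if $y_k\to y\neq\emptyset$ then eventually $\min y_k=\min y=:m$, whence $\varphi(y_k)\le m$ and lower semicontinuity gives $\varphi(y)\le m$. For $\mathcal{I}\subseteq\mathcal{I}_K$, take $x$ with $\varphi(x)\le M$; then $x\cap[M,\infty)$ lies in $K$ because $\varphi(x\cap[M,\infty))\le\varphi(x)\le M\le\min\bigl(x\cap[M,\infty)\bigr)$, while $x\cap[0,M)$ is a finite union of singletons. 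The reverse inclusion is immediate.
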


In \cite{GrebikHrusak2017} it is proved that the set of codes of tall $F_\sigma$ ideals and the set of codes of tall $F_\sigma$ ideals containing the ideal $\fin$ are $\mathbf\Pi^1_2-$complete.

Next we state the combinatorial theorems (as presented in \cite{Todor2006}). 
Let $s,t\in [\mathbb{N}]^{<\omega}$. 
We write $s\sqsubseteq t$ when there is $n\in \omega$ such that
$s=t\cap \{0,1,\cdots, n\}$ and we say that $s$ is an initial
segment of $t$.

\begin{theorem}[Galvin]
\label{Galvin}
Let $\mathcal{F}\subseteq [\mathbb{N}]^{<\omega}$ and an infinite $x\in 2^{\mathbb{N}}$. Then there is an infinite $y\subseteq x$ such that one of the following holds
\begin{itemize}
\item for all $z\in [y]^{\omega}$ there is $s\in \mathcal{F}$ such that $s\sqsubseteq z$,
\item $[y]^{<\infty}\cap \mathcal{F}=\emptyset$.
\end{itemize}
\end{theorem}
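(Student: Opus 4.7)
The plan is to reduce Galvin's theorem to an instance of the \emph{open Ramsey} principle on $[\mathbb{N}]^\omega$. After relativizing we may assume $x=\mathbb{N}$. Associate to $\mathcal{F}$ the set
\[
\mathcal{F}^*:=\{z\in[\mathbb{N}]^\omega : (\exists s\in\mathcal{F})\ s\sqsubseteq z\},
\]
which is open in $[\mathbb{N}]^\omega$ with the subspace topology, since membership is decided by a finite initial segment of $z$. The theorem is then equivalent to the dichotomy: there is an infinite $y\subseteq x$ with either $[y]^\omega\subseteq\mathcal{F}^*$ (yielding the first alternative) or $[y]^\omega\cap\mathcal{F}^*=\emptyset$. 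For the second case, if some $s\in\mathcal{F}\cap[y]^{<\omega}$ existed, then appending the elements of $y$ above $\max s$ to $s$ would yield an infinite $z\subseteq y$ with $z\in\mathcal{F}^*$, a contradiction; hence $[y]^{<\omega}\cap\mathcal{F}=\emptyset$, giving the second alternative.

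To establish the dichotomy I would carry out a fusion (diagonal) argument. Fix an enumeration $(a_i)_{i\in\omega}$ of $[\mathbb{N}]^{<\omega}$ and recursively build a decreasing chain of infinite sets $X_0\supseteq X_1\supseteq\cdots$ together with integers $n_0<n_1<\cdots$, where $n_i=\min X_i$. At stage $i+1$, for the finite set $a_i$ (assuming $a_i\subseteq\{n_0,\ldots,n_{i-1}\}$), a pigeonhole step picks an infinite $X_{i+1}\subseteq X_i\setminus(\max a_i+1)$ on which a uniform decision is made: either every infinite extension of $a_i$ inside $X_{i+1}$ has an initial segment in $\mathcal{F}$ extending $a_i$, or no such finite extension of $a_i$ itself lies in $\mathcal{F}$. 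Let $y=\{n_i:i\in\omega\}$.

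A final pigeonhole combines these local decisions into a global one: partitioning $[y]^{<\omega}$ by the decision recorded for each of its members and applying Ramsey's theorem (or iterating the fusion once more) yields an infinite $y'\subseteq y$ on which exactly one side of the dichotomy holds uniformly. The main obstacle I anticipate is the bookkeeping in the fusion, namely ensuring that a decision made at stage $i$ about $a_i$ is preserved when we further thin past stage $i$ and that the diagonal $y$ inherits every such decision. This is handled by the standard device of enumerating all the finite subsets in advance and always thinning by an infinite tail; this fusion is the combinatorial heart of the proof and of the related open Ramsey theorems.
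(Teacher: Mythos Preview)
The paper does not prove Galvin's theorem; it is stated in the preliminaries as a classical result (attributed to Galvin and cited as presented in \cite{Todor2006}), so there is no in-paper proof to compare against. Your reduction to the open Ramsey property via $\mathcal{F}^*$ and the verification that $[y]^\omega\cap\mathcal{F}^*=\emptyset$ implies $[y]^{<\omega}\cap\mathcal{F}=\emptyset$ are correct, and the fusion strategy you outline is indeed the standard route.

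One point to tighten: the per-stage dichotomy you invoke is not literally a pigeonhole step. The correct local alternative is ``$a_i$ \emph{accepts} some infinite $X_{i+1}\subseteq X_i$'' versus ``$a_i$ \emph{rejects} $X_i$'' (meaning no infinite subset of $X_i$ is accepted by $a_i$); you cannot in one step force that no finite extension of $a_i$ inside $X_{i+1}$ lies in $\mathcal{F}$. The conclusion $[y]^{<\omega}\cap\mathcal{F}=\emptyset$ is obtained only at the end, by showing that if $\emptyset$ rejects $y$ then inductively every finite $a\subseteq y$ rejects the tail $y\setminus(\max a+1)$, whence no such $a$ can belong to $\mathcal{F}$ (an element of $\mathcal{F}$ trivially accepts any tail). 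With that correction the argument goes through, and no appeal to Ramsey's theorem for a final clean-up is needed.
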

We can think of $\mathcal{F}$ as a coloring of $[\mathbb{N}]^{<\omega}$ and put $hom(\mathcal{F})\subseteq 2^{\mathbb{N}}$ for the family of all $y$  that satisfy one of the conditions from the Galvin's theorem, such sets are called $\mathcal{F}$-homogeneous. 
It is clear that $hom(\F)$ is an hereditary tall collection. Moreover,  the family of of sets that satisfy the second condition is closed and hereditary and  the family of sets that satisfy the first condition is $\mathbf\Pi^1_1$.
We write $\mathbb{P}_2$ for the set of all those $\mathcal{F}\subseteq [\mathbb{N}]^{<\omega}$ such that first condition in Galvin's theorem is never satisfied.

A special type of coloring of $[\mathbb{N}]^{<\omega}$ are as follows. We say that $\mathcal{B}\subseteq [\mathbb{N}]^{<\omega}$ is a {\em front} on an infinite $x\in 2^{\mathbb{N}}$ if 
\begin{itemize}
\item every two elements of $\mathcal{B}$ are $\sqsubseteq$-incomparable,
\item every infinite $y\subseteq x$ has an initial segment in
$\mathcal{B}$.
\end{itemize}

\begin{theorem}[Nash-Williams]
\label{NWthm}
Let $\mathcal{B}$ be a front on $\mathbb{N}$ and $\mathcal{F}\subseteq \mathcal{B}$ then for every infinite $x\in 2^{\mathbb{N}}$ there is an infinite $y\subseteq x$ such that one of the following holds
\begin{itemize}
\item $[y]^{<\omega}\cap \mathcal{B}\subseteq \mathcal{F}$,
\item $[y]^{<\omega}\cap \mathcal{F}=\emptyset$.
\end{itemize}
\end{theorem}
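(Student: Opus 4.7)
The plan is to derive Nash-Williams (Theorem~\ref{NWthm}) from Galvin's theorem (Theorem~\ref{Galvin}), applied to the subfamily $\mathcal{F}$ itself, and then use the defining property of a front (pairwise $\sqsubseteq$-incomparability) to rule out the undesired case.

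First, given the infinite $x$ and the families $\F\subseteq\B$, I apply Galvin's theorem to $\F$ and $x$. This yields an infinite $y\subseteq x$ such that either (i) every infinite $z\in[y]^{\omega}$ has some $s\in\F$ with $s\sqsubseteq z$, or (ii) $[y]^{<\omega}\cap\F=\emptyset$. In case (ii), the second alternative of Nash-Williams holds immediately, so I pass the same $y$.

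The substantive case is (i), where I claim the first alternative of Nash-Williams holds, i.e., $[y]^{<\omega}\cap\B\subseteq\F$. So fix an arbitrary $t\in[y]^{<\omega}\cap\B$; I need $t\in\F$. The trick is to manufacture a single infinite $z\subseteq y$ which has $t$ as an initial segment, e.g.\ $z:=t\cup(y\setminus\{0,1,\dots,\max t\})$, which is an infinite subset of $y$ with $t\sqsubseteq z$. By (i), there is $s\in\F\subseteq\B$ with $s\sqsubseteq z$. Now both $s$ and $t$ are initial segments of the same set $z$, so they are $\sqsubseteq$-comparable; since they both belong to the front $\B$, whose elements are pairwise $\sqsubseteq$-incomparable, we must have $s=t$. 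Hence $t\in\F$, which proves $[y]^{<\omega}\cap\B\subseteq\F$.

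There is no real obstacle here; the only point requiring a little care is the construction of $z$ so that it genuinely lies in $[y]^{\omega}$ with $t\sqsubseteq z$ (which requires $t\subseteq y$, guaranteed by $t\in[y]^{<\omega}$, and $y$ being infinite above $\max t$, guaranteed by $y$ itself being infinite). Once that $z$ is available, the incomparability clause in the definition of a front collapses the two potential witnesses $s,t\in\B$ into the same element, which is precisely what makes the reduction from Galvin to Nash-Williams work.
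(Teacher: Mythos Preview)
Your derivation is correct: applying Galvin's theorem to $\F$ and then using the $\sqsubseteq$-incomparability of the front $\B$ to force $s=t$ in the first alternative is exactly the standard way to recover Nash-Williams from Galvin. The construction of $z=t\cup(y\setminus\{0,\dots,\max t\})$ is fine, and the comparability-plus-antichain argument is the right mechanism.

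Note, however, that the paper does not actually prove Theorem~\ref{NWthm}; it is stated as a classical result (attributed to Nash-Williams~\cite{nash-williams68}) and used as background. So there is no proof in the paper to compare against. Your argument is a clean and self-contained justification, and it is the one usually given when Galvin's theorem is already available.
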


Let $\F\subseteq\B$ as above,  it is easy to verify that $y\in hom(\mathcal{F})$ iff $y$ satisfies one of the conditions from the Nash-Williams' theorem.  Moreover, the family $hom(\mathcal{F})$ is easily seen to be closed, hereditary and tall. 

\begin{proposition}
\label{rep-of-closed}
For every closed, tall and hereditary $K\subseteq 2^{\mathbb{N}}$ there is $\mathcal{F}\subseteq [\mathbb{N}]^{<\omega}$ such that $hom(\mathcal{F})=K$.
\end{proposition}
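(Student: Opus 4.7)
The plan is to take
$$\F := \{s \in [\mathbb{N}]^{<\omega} : s \notin K\}$$
and verify $hom(\F)=K$ on infinite sets (finite sets satisfy Galvin's first condition vacuously, so they automatically lie in $hom(\F)$; the equality in the statement is read with this standing convention understood, as elsewhere in this subsection).

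For the easy inclusion $K\subseteq hom(\F)$: if $y\in K$, then hereditariness of $K$ yields $[y]^{<\omega}\subseteq K$, so no finite subset of $y$ lies in $\F$. Hence $y$ satisfies Galvin's second condition $[y]^{<\omega}\cap\F=\emptyset$, and so $y\in hom(\F)$.

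For the reverse inclusion, let $y$ be infinite with $y\in hom(\F)$. If $y$ satisfies the second condition, then every finite subset of $y$ lies in $K$; in particular, the initial segments $y\cap\{0,1,\dots,n-1\}$ lie in $K$ and converge to $y$ in $2^{\mathbb{N}}$, so $y\in K$ by closedness of $K$. If instead $y$ satisfies only the first condition, then every infinite $z\subseteq y$ has an initial segment in $\F$, equivalently an initial segment outside $K$. This is where tallness enters: it furnishes an infinite $z\subseteq y$ with $z\in K$, and then hereditariness of $K$ forces \emph{every} initial segment of this particular $z$ into $K$, hence outside $\F$, contradicting the first condition. Thus only the second case occurs, and $y\in K$.

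The only genuinely non-routine step is this use of tallness to rule out Galvin's first condition for infinite $y\in hom(\F)$; once it is in place, the equality $hom(\F)=K$ is a direct unwinding of ``hereditary'' and ``closed''.
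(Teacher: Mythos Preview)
Your proof is correct and follows essentially the same approach as the paper: you define the identical $\mathcal{F}=\{s\in[\mathbb{N}]^{<\omega}:s\notin K\}$, use tallness together with hereditariness to rule out Galvin's first alternative, and then invoke closedness (via convergence of initial segments, where the paper argues the contrapositive) to obtain $hom(\mathcal{F})\subseteq K$. Your explicit remark about the finite-set convention matches the paper's implicit restriction to $[\omega]^\omega$ in its claim that $hom(\mathcal{F}_K)=\{y\in[\omega]^\omega:[y]^{<\omega}\cap\mathcal{F}_K=\emptyset\}$.
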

\begin{proof}
Define $\mathcal{F}_K=\{s\in [\mathbb{N}]^{<\omega}:s\not\in K\}$.  We claim that $hom(\mathcal{F}_K)$ is equal to $\{y\in [\omega]^\omega: [y]^{<\infty}\cap \mathcal{F}_K=\emptyset\}$. Let $y\in hom(\mathcal{F}_K)$ and suppose $y$  satisfies the first condition in Galvin's theorem.  
Since $K$ is tall there is an infinite $z\subseteq y$ such that $z\in K$. As $y$ satisfies the first condition, there is $s\in \mathcal{F}_K$ such that $s\sqsubseteq z$ but since $K$ is hereditary we have $s\in K$ and this contradicts the definition of $\mathcal{F}_K$. 

It remains to check that $K=hom(\mathcal{F}_K)$.
Clearly $\subseteq$ holds.
For the opposite take $x\not\in K$.
Since $K$ is hereditary and closed there must be some $n\in\mathbb{N}$ such that $x\cap n \not\in K$ then we have $x\cap n\in \mathcal{F}_K$. Thus $x\not\in hom(\mathcal{F}_K)$. 
\end{proof}

\begin{proposition}
The set $\mathbb{P}_2$ is $\mathbf\Pi^1_2-$complete.
\end{proposition}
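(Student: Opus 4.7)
The plan is to show $\mathbb{P}_2\in \mathbf\Pi^1_2$ by a quantifier count and then to continuously reduce the $\mathbf\Pi^1_2$-complete set $\mathcal{T}:=\{K\in K(\cantor):\mathcal{I}_K\text{ is tall}\}$ (cited above) into $\mathbb{P}_2$. The upper bound is immediate from
\[
\mathcal{F}\in\mathbb{P}_2\iff\forall\,y\in \infiN\ \exists\,z\in[y]^{\omega}\ \forall\,s\sqsubseteq z\ (s\notin \mathcal{F}),
\]
whose matrix is Borel.

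For hardness, to $K\in K(\cantor)$ I assign
\[
\mathcal{F}(K):=\{s\in\finN:\forall y\in K,\ s\not\subseteq y\}.
\]
For each fixed $s$, the set $\{y:s\subseteq y\}$ is clopen in $\cantor$, so ``$s\in\mathcal{F}(K)$'' is the Vietoris-open condition $K\subseteq\{y:s\not\subseteq y\}$ and its negation is also Vietoris-open; hence $K\mapsto\mathcal{F}(K)\in 2^{\finN}$ is continuous.

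The core of the proof is the equivalence $\mathcal{F}(K)\in\mathbb{P}_2\iff \mathcal{I}_K\text{ is tall}$. The easy direction is straightforward: if $\mathcal{I}_K$ is tall and $y\in\infiN$, then $y$ contains an infinite subset covered by finitely many members of $K$, and one of those must meet $y$ in an infinite set; picking such $\tilde y\in K$ and setting $z:=y\cap\tilde y$, every $s\sqsubseteq z$ sits inside $\tilde y$, so $s\notin \mathcal{F}(K)$. For the converse, given infinite $x$, $\mathbb{P}_2$-membership applied to $y:=x$ yields $z\in[x]^{\omega}$ all of whose initial segments $s\sqsubseteq z$ are contained in some $y_s\in K$. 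The main obstacle, and the only nontrivial step in the whole argument, is to upgrade this segment-by-segment covering to a single $\tilde y\in K$ with $z\subseteq\tilde y$. I plan to do this by compactness of $K$: enumerate the initial segments of $z$ as $s_0\sqsubseteq s_1\sqsubseteq\cdots$ and extract a convergent subsequence $y_{s_{n_k}}\to\tilde y\in K$; then for every $m\in z$ all sufficiently large $s_{n_k}$ contain $m$ and hence satisfy $y_{s_{n_k}}(m)=1$, so $\tilde y(m)=1$. Thus $z\subseteq\tilde y\in K$ gives $z\in\mathcal{I}_K$, witnessing $\mathcal{I}_K$-tallness at $x$ and completing the reduction.
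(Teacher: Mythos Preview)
Your proof is correct and follows essentially the same route as the paper: the same continuous map $K\mapsto\{s:\forall y\in K,\ s\not\subseteq y\}$ is used to reduce the $\mathbf\Pi^1_2$-complete set $\mathcal T$ of codes for tall $F_\sigma$ ideals into $\mathbb P_2$, and the $\mathbf\Pi^1_2$ upper bound is the same quantifier count. The paper merely asserts ``one may check that $\mathcal T=\psi^{-1}(\mathbb P_2)$,'' whereas you actually carry out the verification; in particular, your compactness argument (extracting a convergent subsequence of the witnesses $y_s\in K$ to obtain a single $\tilde y\in K$ covering $z$) is exactly the missing step the paper leaves implicit, and it is done correctly.
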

\begin{proof}
This is a generalization of previous argument. Let $\mathcal{T}$ be the set of all $K\in K(2^{\mathbb{N}})$ which generates a tall $F_\sigma$ ideal. As it was already mentioned $\mathcal{T}$ is $\mathbf\Pi^1_2-$complete (see \cite{GrebikHrusak2017}).

Consider the continuous map $\psi:K(2^{\mathbb{N}})\to \mathcal{P}(\mathbb{N}^{<\omega})$ given by
$$s\in \psi(K) \Leftrightarrow \forall x\in K \ s\not\subseteq x.$$
One may check that $\mathcal{T}=\psi^{-1}(\mathbb{P}_2)$ and the desired result follows since $\mathbb{P}_2$ is easily seen to be $\mathbf\Pi^1_2$.
\end{proof}

\section{Positive results}
In this section we prove the uniform version of the Nash-Williams's theorem. To state our theorem in full generality we must first introduce several definitions.
 
\subsection{Uniformly $p^+$, $q^+$\ and selective ideals}

Let $\mathcal{I}$ be an ideal on $\mathbb{N}$.
Recall that $\mathcal{I}^+=2^{\mathbb{N}}\setminus \mathcal{I}$.
We say that $\I$ is $q^+$ if for all $x\in \I^+$ and  every partition $\{s_n\}_{n<\omega}$
of $x$ into finite sets there is $y\subseteq x$ such that $y\in\I^+$ and $|y\cap x_n|\leq 1$ for all $n<\omega$.
It is $p^+$ if for every decreasing sequence $(x_n)_{n<\omega}$ of sets in $\I^+$ there is $x\in \I^+$ such that $x\setminus x_n$ is finite for all $n<\omega$.
It is {\em selective}, if for every decreasing sequence $(x_n)_{n<\omega}$ of sets in $\I^+$ there is $x\in \I^+$ such that $x/ n\subseteq  x_n$  for all $n\in x$.   
We are interested in the uniform versions of these notions.
We say that  a Borel ideal $\I$ is {\em uniformly selective} if there is a Borel function $F$ such that whenever $(x_n)_{n<\omega}$ is a decreasing sequence of sets in $\I^+$, then $F((x_n)_{n<\omega})=x$ is in $\I^+$ and $x/n\subseteq x_n$ for all $n\in x$.
In an analogous way, we define when an ideal is uniformly $p^+$ or $q^+$.

\begin{lemma}\label{Selective=q+p}
A Borel ideal $\I$ is uniformly selective iff it is uniformly $p^+$ and  $q^+$.
\end{lemma}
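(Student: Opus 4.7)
My plan is to prove the two implications separately.

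For the forward direction, if $F_s$ is a Borel uniform selective witness, then $F_s$ itself witnesses uniform $p^+$: given a decreasing sequence $(x_n)$ in $\I^+$, the set $x = F_s((x_n))$ has $x/n \subseteq x_n$ for every $n \in x$, so picking $n' \in x$ with $n' \geq n$ yields $x \setminus x_n \subseteq x \cap [0, n']$, which is finite. For uniform $q^+$, given $x \in \I^+$ and a partition $(s_n)$ of $x$ into finite sets, I would Borel-definably build the decreasing sequence $y_n = x \setminus \bigcup\{s_j : s_j \cap [0, n] \neq \emptyset\}$, whose terms are cofinite in $x$ (hence in $\I^+$) and nested, and set $z = F_s((y_n))$. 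Two points of $z$ in a common $s_k$, with $n_0$ the smaller, would satisfy $z/n_0 \subseteq y_{n_0}$, forcing the larger into $y_{n_0}$, which explicitly excludes $s_k$; so $|z \cap s_k| \leq 1$ and $z$ is the desired $q^+$ witness.

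For the backward direction, given Borel witnesses $F_p$ and $F_q$, I would build a Borel selective function $F_s$ in two stages. First, apply $F_p$ to $(x_n)$ to obtain $y \in \I^+$ with $y \setminus x_n$ finite for each $n$, and define the Borel-computable nondecreasing function $b_n = \max(y \setminus x_n) + 1$, for which $y \cap [b_n, \infty) \subseteq x_n$. Second, Borel-definably design a partition $(P_k)$ of $y$ into finite parts so that every transversal $z$ (a set with $|z \cap P_k| \leq 1$) automatically satisfies the selective diagonal condition $z/m \subseteq x_m$ for $m \in z$, and take $F_s((x_n)) = F_q(y, (P_k))$. The output lies in $\I^+$ by hypothesis on $F_q$ and is selective by the design of the partition.

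The main obstacle is the construction of $(P_k)$: the selective condition demands that the smallest element of $P_{k+1}$ exceed $b_m$ for every $m \in P_k$, and interval-based partitions of $\omega$ or of the enumeration of $y$ produce a fixed-point constraint on the block endpoints of the form $N \geq b_{N-1}$, which need not have infinitely many solutions when $b$ grows rapidly. My plan for bypassing this is to exploit the Borel-definable partition of $y$ by the finite-fibered function $f(l) = \max\{k : l \in x_k\}$ (finite-fibered because $y \setminus x_k$ is finite for every $k$): a first application of $F_q$ to this $f$-partition produces an $\I^+$ subset of $y$ on which $f$ is injective, and a subsequent reapplication of $F_p$ to the decreasing sequence obtained by intersecting this subset with the $x_n$'s can be used to force $f$ to be strictly increasing and sufficiently fast growing along a further thinning, yielding the selective condition. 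Carrying out this sequence of Borel operations and verifying that the combined output truly satisfies $z/m \subseteq x_m$ is the delicate technical heart of the argument.
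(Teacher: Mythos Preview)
Your forward direction is essentially correct. (One triviality: the diagonal $z=F_s((y_n))$ returned by a selective witness need not have its \emph{least} element inside $x$, since the condition $z/n\subseteq y_n$ says nothing about $\min z$; intersecting with $x$ fixes this in a Borel way.)

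The backward direction, however, has a genuine gap. You correctly observe that after one application of $F_p$ the naive interval partition of $y$ would require block endpoints $N$ with $N\geq b_{N-1}$, and that this fixed-point condition can fail. But your proposed workaround---passing to the finite-fibered map $f(l)=\max\{k:l\in x_k\}$, applying $F_q$ to make $f$ injective, and then reapplying $F_p$---does not close the gap. After those steps you are back in the same situation: you have some $w\in\I^+$ with $w\setminus x_n$ finite and an injective $f$ on $w$, and you still need, for $m<m'$ in your final set, that $f(m')>m$. Forcing this again reduces to a partition whose block endpoints satisfy the very fixed-point constraint you were trying to avoid. Nothing in a second $F_p$ pass makes $f$ monotone or fast-growing in the required sense.

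The standard argument the paper points to sidesteps the obstruction differently, and more simply. After $F_p$ gives $y=\{a_0<a_1<\cdots\}$, set $m_0=0$ and let $m_{k+1}$ be the least $m>m_k$ with $\{a_j:j\geq m\}\subseteq x_{a_{m_k}}$; this recursion is not circular and always terminates. Apply $F_q$ to the resulting interval partition $I_k=\{a_{m_k},\dots,a_{m_{k+1}-1}\}$ to get a selector $z$. This $z$ need not yet be a diagonal (the guarantee is only $c'\in x_{\min I_k}$ for $c'\in I_{k'}$, $k'>k$, not $c'\in x_c$ for $c\in I_k$), but splitting $z$ according to the parity of the block index and choosing the half lying in $\I^+$ (a Borel decision, since $\I$ is Borel) does give a diagonal: if $c\in I_k$ and $c'\in I_{k'}$ with $k'\geq k+2$, then $c'\in x_{a_{m_{k+1}}}\subseteq x_c$ because $c<a_{m_{k+1}}$. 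This single extra thinning replaces your proposed multi-pass scheme.
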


\proof Follow an standard proof of the  fact that an ideal es selective iff it is $p^+$ and  $q^+$ (see for instance \cite[Lemma 7.4]{Todor2010}).
\qed

\begin{theorem}
	\label{unif-p-q} Let $\I$ be a $\fsig$ ideal. Then
	\begin{itemize}
		\item[(i)] $\I$ is uniformly $p^+$.
		
		\item[(ii)] if $\I$ is $q^+$, then it is uniformly $q^+$.
	\end{itemize}
	
	In particular, every selective $\fsig$ ideal is uniformly
	selective.
	 
\end{theorem}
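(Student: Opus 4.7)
The plan is to fix, via Mazur's theorem, an lcsms $\varphi$ with $\I=Fin(\varphi)$, and to assume without loss of generality that $\fin\subseteq\I$, so that $\varphi$ is finite on every finite set. Throughout, all the constructions are explicit stagewise recursions whose steps depend in a Borel (indeed lower-semicontinuous) way on the input, so the Borelness of the resulting witnesses is automatic and I will not belabor it.

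For~(i), given a decreasing sequence $(x_n)_n$ in $\I^+$, I would produce $x$ as a disjoint union of finite blocks cut from successive $x_n$. Set $k_0:=0$ and let $k_{n+1}$ be the least $k>k_n$ with $\varphi(x_n\cap[k_n,k))>n$. Such a $k$ exists: subadditivity gives $\varphi(x_n)\le\varphi(x_n\cap k_n)+\varphi(x_n\setminus k_n)$, and since $\varphi(x_n\cap k_n)<\infty$ while $\varphi(x_n)=\infty$, we get $\varphi(x_n\setminus k_n)=\infty$, so the continuity property $\varphi(y)=\lim_m\varphi(y\cap m)$ lets the threshold $n$ be realized at some finite $k$. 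Putting $F_n:=x_n\cap[k_n,k_{n+1})$ and $x:=\bigcup_n F_n$, monotonicity gives $\varphi(x)\ge\varphi(F_n)>n$ for every $n$, so $x\in\I^+$; and the decreasing hypothesis gives $F_n\subseteq x_m$ whenever $n\ge m$, so $x\setminus x_m\subseteq\bigcup_{n<m}F_n$ is finite.

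For~(ii), assume $\I$ is $q^+$ and let $x\in\I^+$ be partitioned into finite sets $\{s_n\}_n$. I would build $y$ by a greedy recursion maintaining at stage $N$ a finite partial selector $F_N$ of $\{s_i\}_{i<m_N}$ with $\varphi(F_N)>N$, starting from $F_0:=\emptyset$, $m_0:=0$. Given $F_N,m_N$, take $m_{N+1}$ to be the least $m>m_N$ admitting a finite selector $G$ of $\{s_{m_N},\dots,s_{m-1}\}$ with $\varphi(F_N\cup G)>N+1$, let $G_{N+1}$ be the lexicographically least such $G$, set $F_{N+1}:=F_N\cup G_{N+1}$, and finally $y:=\bigcup_N F_N$. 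Existence of $m_{N+1}$ is where $q^+$ enters: applying $q^+$ to $\bigcup_{i\ge m_N}s_i\in\I^+$ (in $\I^+$ because $\fin\subseteq\I$) with its induced partition yields a selector $y^\ast$ with $\varphi(y^\ast)=\infty$, and continuity then forces $\varphi(F_N\cup(y^\ast\cap m))\to\infty$, so a sufficiently large initial segment of $y^\ast$ witnesses $m_{N+1}$. The output $y$ is a selector of $\{s_n\}_n$ with $\varphi(y)\ge\varphi(F_N)>N$ for every $N$, so $y\in\I^+$.

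The main obstacle is keeping the recursion in~(ii) from stalling, which is precisely the content of the $q^+$ hypothesis, and making a canonical choice of $G_{N+1}$ at each stage so that the whole assignment is Borel; the lex-least choice does the job. The ``in particular'' clause then follows at once from Lemma~\ref{Selective=q+p}: a selective $F_\sigma$ ideal is both $p^+$ and $q^+$, hence by~(i) and~(ii) uniformly $p^+$ and uniformly $q^+$, and therefore uniformly selective.
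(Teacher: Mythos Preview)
Your proof is correct and follows essentially the same strategy as the paper: fix a Mazur submeasure $\varphi$ with $\I=Fin(\varphi)$, and for both (i) and (ii) build the witness as an increasing union of finite sets whose $\varphi$-value exceeds $n$ at stage $n$, making canonical (``least index'') choices so that the construction is Borel. The only cosmetic differences are that for (i) you cut your pieces from consecutive intervals $[k_n,k_{n+1})$ whereas the paper simply takes the first finite $s_k\subseteq x_n$ with $\varphi(s_k)\ge n$, and for (ii) you extend the partial selector block-by-block along the partition whereas the paper searches directly among all finite partial selectors extending the previous stage; neither difference affects the substance of the argument.
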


\proof
Let $\{s_k\}_{k<\omega}$ be some enumeration of $[\mathbb{N}]^{<\omega}$ and let $\mu$ be the lower semicontinuous submeasure such that
$\I =\{x\in2^{\mathbb{N}}: \mu(x)<\infty\}$.
First we claim that for
each $n\in \omega$ there is a Borel function $G_n:2^{\mathbb{N}}\rightarrow 2^{\mathbb{N}}$ such that for all $x\not\in \I$, $G_n(x)$ is a finite
subset of $x$ and $\mu(G_n(x))\geq n$.
Define $G_n(x)=\emptyset$ for $x\in \I$.
For $x\in \I^+$ let $G_n(x)=s_k$ where $k$ is the minimal index such that $s_k\subseteq x$ and $\mu(s_k)\ge n$.

(i) Let $(x_n)_{n<\omega}$ be a decreasing sequence of sets in $\I^+$.
Define $G((x_n)_{n<\omega})=\bigcup_{n<\omega} G_n(x_n)$.
Then $G$ is Borel and has the required property.
	
(ii) We define inductively sequence of Borel functions $\{F_n\}_{n<\omega}$ where $F_n:2^{\omega}\times ([\mathbb{N}]^{<\omega})^\omega\to [\mathbb{N}]^{<\omega}$ and for $(x,(t_i)_{i<\omega})\in 2^{\omega}\times ([\mathbb{N}]^{<\omega})^\omega$ we have
\begin{itemize}
\item $F_0(x,(t_i)_{i<\omega})=\emptyset$,
\item if $n>0$, $x\in\I^+$ and $(t_i)_{i<\omega}$ is a partition of $x$ then let $F_n(x,(t_i)_{i<\omega})=s_k$ where $k$ is the minimal index such that $s_k$ is a partial selector, $\mu(s_k)\ge n$ and $F_{n-1}(x,(t_i)_{i<\omega})\subseteq s_k$,
\item otherwise put $F_n(x,(t_i)_{i<\omega})=\emptyset$.
\end{itemize}
These are clearly Borel conditions and the functions are well defined since $\I$ is $q^+$.
Finally put $F(x,(t_i)_{i<\omega})=\bigcup_{n<\omega} F_n(x,(t_i)_{i<\omega})$.
\qed

\begin{corollary}
	\label{fin-unifor-selective}
	$\fin$ is uniformly selective.
\end{corollary}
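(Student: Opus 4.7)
The plan is to invoke Theorem \ref{unif-p-q} directly. Since that theorem states that every selective $F_\sigma$ ideal is uniformly selective, it suffices to check that $\fin$ is an $F_\sigma$ ideal and that $\fin$ is selective.

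First, $\fin$ is clearly $F_\sigma$: it is witnessed by the lower semicontinuous submeasure $\mu(x)=|x|$, since $Fin(\mu)=\fin$. Alternatively, $\fin=\bigcup_{n<\omega}\{x\in 2^{\mathbb N}:|x|\le n\}$ is a countable union of closed sets.

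Next, $\fin$ is selective. Given a $\subseteq$-decreasing sequence $(x_n)_{n<\omega}$ of sets in $\fin^+=\infiN$, construct $x=\{a_0<a_1<\cdots\}$ by the classical diagonal recursion: pick $a_0\in x_0$ arbitrary, and having chosen $a_0<\cdots<a_{n-1}$ with $a_i\in x_i$, pick $a_n\in x_n$ with $a_n>a_{n-1}$; this is possible because $x_n$ is infinite. The set $x$ is infinite, hence in $\fin^+$, and for every $n\in x$, say $n=a_k$, one has $x/n=\{a_{k+1},a_{k+2},\ldots\}\subseteq x_{k+1}\subseteq x_n$ (using that the sequence is decreasing and $k+1\le a_k=n$).

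With these two facts established, Theorem \ref{unif-p-q} yields that $\fin$ is uniformly selective. No step is a real obstacle here; the only thing to notice is that the classical witness for selectivity is already effective, so one could alternatively write down a Borel selection function $F$ directly by mimicking the recursion above (at stage $n$ let $F((x_n)_{n<\omega})$ include the least element of $x_n$ above the previously chosen element), but invoking Theorem \ref{unif-p-q} is the cleanest route.
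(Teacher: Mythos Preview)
Your approach is exactly the paper's: the corollary is placed immediately after Theorem~\ref{unif-p-q} and is meant to follow from its ``in particular'' clause, once one knows $\fin$ is $F_\sigma$ and selective.

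There is, however, a slip in your verification of selectivity. With $a_n\in x_n$ you correctly obtain $x/a_k=\{a_{k+1},a_{k+2},\ldots\}\subseteq x_{k+1}$, but the next inclusion $x_{k+1}\subseteq x_{a_k}$ goes the wrong way: since the sequence is $\subseteq$-decreasing, a larger index gives a smaller set, so $x_{k+1}\subseteq x_{a_k}$ would require $a_k\le k+1$, not the inequality $k+1\le a_k$ you invoke (and $a_k\le k+1$ is of course false in general). The usual fix is to diagonalize through the values rather than the indices: take $a_0\in x_0$ and then $a_{n+1}\in x_{a_n}$ with $a_{n+1}>a_n$. For $n=a_k$ one then has $a_{k+1}\in x_{a_k}$ and inductively $a_j\in x_{a_{j-1}}\subseteq x_{a_k}$ for all $j>k$, giving $x/n\subseteq x_n$. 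With this correction your argument goes through and matches the paper.
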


Let $\mathcal A$ be an almost disjoint family of infinite subsets of
$\mathbb{N}$ and $\I(\mathcal{A})$ be the ideal generated by $\mathcal A$. By a
result of Mathias \cite{Mathias77}, $\I(\mathcal{A})$ is selective. It
is easy to verify  that when $\mathcal A$ is closed (as a subset of
$\cantor$), then  $\I(\mathcal{A})$ is $\fsig$. Hence from Theorem
\ref{unif-p-q} we get the following

\begin{corollary}
	Let $\mathcal A$ be a closed almost disjoint family.  Then $\I(\mathcal{A})$
	is uniformly selective.
\end{corollary}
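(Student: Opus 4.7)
The plan is to verify that $\I(\mathcal{A})$ satisfies the two hypotheses needed to invoke the final clause of Theorem \ref{unif-p-q}, namely that it is an $F_\sigma$ ideal and that it is selective. Once these are in place, the conclusion that $\I(\mathcal{A})$ is uniformly selective is immediate.

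First, I would observe that since $\mathcal{A}$ is closed in $\cantor$, the space $\mathcal{A}$ is compact, and hence $\mathcal{A}\in K(\cantor)$. The ideal generated by $\mathcal{A}$ coincides with $\I_{\mathcal{A}}$ in the notation introduced before Proposition \ref{prop1}, and as noted there, any such $\I_K$ is automatically $F_\sigma$ (one obtains it as a countable union of compact sets, the $n$-th one being $\{x : \exists y_0,\dots,y_{n-1}\in\mathcal{A}\ \ x\subseteq y_0\cup\cdots\cup y_{n-1}\}$, which is a continuous image of $\mathcal{A}^n\times 2^{\mathbb{N}}$-restricted appropriately). Therefore $\I(\mathcal{A})$ is $F_\sigma$.

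Second, I would appeal to the cited theorem of Mathias \cite{Mathias77}, which asserts that for any almost disjoint family $\mathcal{A}$, the ideal $\I(\mathcal{A})$ is selective. No work is required here beyond quoting this result; in particular, no assumption of closedness of $\mathcal{A}$ is needed for selectivity.

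Finally, having checked that $\I(\mathcal{A})$ is both $F_\sigma$ and selective, the conclusion follows directly from the in-particular clause of Theorem \ref{unif-p-q}: every selective $F_\sigma$ ideal is uniformly selective. I do not foresee any genuine obstacle; the entire argument is a straightforward assembly of the cited results, and the only point needing a small verification is the first one, namely that the ideal generated by a closed (hence compact) subfamily of $\cantor$ is $F_\sigma$, which is standard and was already recorded in the preliminaries.
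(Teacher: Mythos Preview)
Your proposal is correct and follows exactly the argument the paper gives in the paragraph preceding the corollary: verify that $\I(\mathcal{A})$ is $F_\sigma$ (from closedness of $\mathcal{A}$), quote Mathias for selectivity, and apply the final clause of Theorem~\ref{unif-p-q}. There is nothing to add.
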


A natural question is whether the previous result can be extended
to any ideal of the form $\I(\mathcal{A})$ for $\mathcal A$ an analytic
almost disjoint family or, more generally, to any selective
analytic ideal.

\subsection{Uniform Ramsey type theorems}
\label{uniformRamsey}

Recall that the lexicographic order $<_{lex}$ on $[\mathbb{N}]^{<\omega}$ is defined by $s<_{lex}t$ if $min(s{\triangle} t)\in s$.
Let $x\in 2^{\mathbb{N}}$ be infinite and $\mathcal{B}\subseteq [x]^{<\omega}$ be a front on $x$ then the restriction of $<_{lex}$ on $\mathcal{B}$ is a well-order and its order type is called the rank of $\B$ (denoted $rank(\mathcal{B})$).

For $\mathcal{F}\subseteq [\mathbb{N}]^{<\omega}$  we define
$
\overline{\mathcal F}=\{s\in[\mathbb{N}]:^{<\omega}\; s\sqsubseteq t\; \mbox{for some $t\in \mathcal{F}$}\}
$.

\begin{lemma}
\label{1hom}
Let $\B$ be a front and $\F\subseteq  \overline{\B}$. Let  $\widehat{\F}=\{s\in \finN:\;\exists t\in\F, \exists t'\in\B, \, t\sqsubseteq s\sqsubseteq t' \}$. Then  $x\in hom(\F)$ if and only if $[x]^{<\omega}\cap \F= \emptyset$ or $[x]^{<\omega}\cap \overline{\B}\subseteq  \overline{\F}\cup \widehat{\F}$. 
\end{lemma}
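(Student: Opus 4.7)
The plan is to establish both implications, using crucially that a front $\B$ is a $\sqsubseteq$-antichain together with the hypothesis $\F \subseteq \overline{\B}$. For the forward direction, assume $x \in hom(\F)$ and $[x]^{<\omega} \cap \F \neq \emptyset$; then Galvin's first alternative must hold, namely every $z \in [x]^\omega$ has an initial segment in $\F$. Given $s \in [x]^{<\omega} \cap \overline{\B}$, I extend $s$ to some $z \in [x]^\omega$ with $s \sqsubseteq z$ (possible since $x$ is infinite) and apply the first alternative to obtain $t \in \F$ with $t \sqsubseteq z$. Since $s$ and $t$ are both initial segments of $z$ they are $\sqsubseteq$-comparable: if $s \sqsubseteq t$ then $s \in \overline{\F}$, and if $t \sqsubseteq s$ then picking $t' \in \B$ with $s \sqsubseteq t'$ (which exists because $s \in \overline{\B}$) gives $t \sqsubseteq s \sqsubseteq t'$, so $s \in \widehat{\F}$.

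For the backward direction, if $[x]^{<\omega} \cap \F = \emptyset$ Galvin's second alternative is already satisfied, so I may assume $[x]^{<\omega} \cap \overline{\B} \subseteq \overline{\F} \cup \widehat{\F}$ and aim for Galvin's first alternative. Given $z \in [x]^\omega$, the fact that $\B$ is a front provides a unique $u \in \B$ with $u \sqsubseteq z$; then $u \in [x]^{<\omega} \cap \overline{\B}$, so $u \in \overline{\F} \cup \widehat{\F}$. If $u \in \widehat{\F}$, there exist $t \in \F$ and $t' \in \B$ with $t \sqsubseteq u \sqsubseteq t'$; since $u, t' \in \B$ lie in an antichain and $u \sqsubseteq t'$, we must have $u = t'$, and then $t \sqsubseteq u \sqsubseteq z$ supplies the required initial segment of $z$ in $\F$.

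The delicate case, which I expect to be the main obstacle, is $u \in \overline{\F}$: the existence of $t \in \F$ with $u \sqsubseteq t$ does not immediately yield an initial segment of $z$ that lies in $\F$, since $t$ may contain elements outside $z$. The hypothesis $\F \subseteq \overline{\B}$ is what rescues the argument: it produces $t' \in \B$ with $t \sqsubseteq t'$, yielding a chain $u \sqsubseteq t \sqsubseteq t'$ with both $u$ and $t'$ in $\B$. The antichain property of $\B$ then collapses the chain to $u = t' = t$, so $u$ itself lies in $\F$, and $u \sqsubseteq z$ closes the argument.
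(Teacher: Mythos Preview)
Your proof is correct and follows essentially the same approach as the paper's. In fact, you are more careful than the paper in the backward direction: the paper simply asserts ``since $t\in\B$, there is $s\sqsubseteq t$ with $s\in\F$'' without justifying the $\overline{\F}$ case, whereas you correctly identify this as the delicate point and use the hypothesis $\F\subseteq\overline{\B}$ together with the antichain property of $\B$ to collapse the chain $u\sqsubseteq t\sqsubseteq t'$ and conclude $u\in\F$.
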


\proof 

Let $x\in hom(\F)$. Suppose the first item  in the conclusion of Theorem \ref{Galvin} holds. Let $s\subset x$ with $s\in \overline{\B}$ and put $y= s\cup \{n\in x:\; n> \max{s}\}$. Thus there is $t\in \F$ such that $t\sqsubset y$.  Hence $s\sqsubseteq t$ or $t\sqsubseteq s$. In either case, $s\in \overline{\F}\cup \widehat{\F}$. Conversely, suppose that 
$[x]^{<\omega}\cap \overline{\B}\subseteq  \overline{\F}\cup \widehat{\F}$ and let $y\in [x]^{<\omega}$.  Since $\B$ is a front, there is $t\in \B$ such that $t\sqsubset y$. Then $t\in \overline{\F} \cup \widehat{\F}$. Since $t\in \B$,  there is $s\sqsubseteq t$ with $s\in \F$.  Hence $x\in hom(\F)$. \qed

\begin{theorem}
\label{Local-uniform-galvin} 
Let $\B$ be a front on some set $z\in \infiN$  and $\I$ be a uniformly selective Borel ideal on $\omega$. There is a Borel map $S:2^{\overline \B}\times (\I^+\upharpoonright z)\rightarrow \I^+$ such that $S(\F,x)$ is a $\F$-homogeneous subset of $x$ for all $x\in \I^+$ and $x\subseteq  z$.
\end{theorem}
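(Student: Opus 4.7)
The plan is a transfinite induction on $\operatorname{rank}(\B)$, after a preliminary reduction from $\F \subseteq \overline{\B}$ to the Nash-Williams case $\F \subseteq \B$. For the reduction, I would set $\F^+ := \{t \in \B : \exists s \in \F,\ s \sqsubseteq t\}$. Using that $\B$ is a $\sqsubseteq$-antichain (so any $r \in \F \subseteq \overline\B$ and any $t \in \B$ with $r \sqsubseteq t$ force $r$ to extend to a unique $\B$-element) and that $\B$ is a front on every infinite subset of $z$, a short case analysis shows that the two conditions of Lemma \ref{1hom} for $\F$ coincide with the two Nash-Williams conditions for $\F^+ \subseteq \B$, so $hom(\F) = hom(\F^+)$. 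The map $\F \mapsto \F^+$ is continuous (each fiber depends only on finitely many initial segments), so it suffices to produce the selector when $\F \subseteq \B$.

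The base case $\operatorname{rank}(\B) = 1$ is immediate: $\B = \{\{n\} : n \in z\}$, the coloring $\F$ corresponds to $A \subseteq z$, and one outputs $x \cap A$ if this is in $\I^+$ and $x \setminus A$ otherwise; this dichotomy is Borel since $\I$ is Borel. For the inductive step with $\operatorname{rank}(\B) = \alpha > 1$, I would use the derived fronts $\B_n := \{t \setminus \{n\} : t \in \B,\ \min(t) = n\}$ on $z_n := z \cap (n, \infty)$, which have rank strictly less than $\alpha$, and invoke the induction hypothesis to obtain Borel selectors $S_n$. Setting $\F_n := \{t \setminus \{n\} : t \in \F,\ \min(t) = n\} \subseteq \B_n$ and $y_n := S_n(\F_n, x \cap (n, \infty))$ for $n \in x$ then yields a Borel sequence of $\F_n$-homogeneous sets in $\I^+ \upharpoonright z_n$.

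The heart of the argument is a Borel diagonalization: extract $y \subseteq x$ with $y \in \I^+$ such that $y \setminus (n+1) \subseteq y_n$ for every $n \in y$. This diagonal form is the uniform upgrade (using Lemma \ref{Selective=q+p}) of the standard fusion combining $p^+$ (to take pseudo-intersections of successive thinnings of $x$) with $q^+$ (to choose a diagonal selector); the Borel witnesses to uniform selectivity render each fusion step Borel in $(\F, x)$. Once $y$ is in hand, define the Borel $2$-coloring $c : y \to 2$ by $c(n) = 1$ iff $[y_n]^{<\omega} \cap \B_n \subseteq \F_n$, apply the rank-$1$ case to produce a $c$-constant $y^* \subseteq y$ with $y^* \in \I^+$, and verify that $y^*$ is $\F$-homogeneous: for any $t \in [y^*]^{<\omega} \cap \B$, writing $t = \{n\} \cup t'$ with $n = \min(t)$, the suffix $t'$ lies in $\B_n \cap [y_n]^{<\omega}$, so $t \in \F$ iff $t' \in \F_n$, and by $\F_n$-homogeneity of $y_n$ this is determined by $c(n)$, which is constant on $y^*$.

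The main obstacle I anticipate is the Borel diagonalization step. The non-uniform diagonal property is a standard consequence of selectivity, but establishing it as a Borel function of the input sequence $(y_n)_{n \in x}$ requires realizing each step of the classical fusion by the Borel witnesses of uniform $p^+$ and uniform $q^+$ from Lemma \ref{Selective=q+p}, while carefully handling the fact that intersections of $\I^+$ sets need not remain in $\I^+$. Once this is settled, the rest of the argument is routine bookkeeping through the induction.
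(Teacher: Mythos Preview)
Your plan is essentially the paper's: induction on $\operatorname{rank}(\B)$, pass to the derived fronts $\B_n$ and colorings $\F_n$, use uniform selectivity to diagonalize, then split the diagonal $y$ according to the homogeneity type of each $y_n$ (the paper handles general $\F\subseteq\overline\B$ directly via Lemma~\ref{1hom} rather than via your reduction $\F\mapsto\F^+$, but this is cosmetic). The one point to tighten is that your $y_n := S_n(\F_n, x\cap(n,\infty))$ as literally written are not $\subseteq$-decreasing, so uniform selectivity does not apply to them as stated; you must define them recursively by feeding $y_n$ rather than $x$ into $S_{n+1}$ (these are exactly the ``successive thinnings'' you allude to in your discussion of the obstacle), after which the diagonal $y\in\I^+$ with $y\setminus(n+1)\subseteq y_n$ drops out immediately from the definition of uniform selectivity and the rest of your verification is correct.
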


\proof  We may assume that $\B$ is a front on $\mathbb{N}$ and proceed by induction on $\alpha=rank(\B)$. If $rank(\B)=\omega$,
then  $\B= [B]^{1}$. Let $S(\F,x)=(\bigcup \F)\cap
y$, if $(\bigcup\F)\cap x\in\I^+$. Otherwise,
$S(\F,x)=x\setminus \bigcup \F$. Since $\I^+$ is Borel, then $S$ is a Borel function. 

Now suppose that the claim holds for all fronts on some set $z\in [\mathbb{N}]^{\omega}$ of rank less then $\alpha$.
For each $n\in \mathbb{N}$ and $\mathcal{F}\subseteq \B$,  let 
\[
\mathcal{F}_{\{n+1\}}=\{t\in\finN:\; n<\min(t)\;\&\; \{n\}\cup t\in \mathcal{F}\}.
\]
Observe that  $\B_{\{n+1\}}$ is a front on $x/(n+1)=\{m\in x:\; n<m\}$ with rank less than
$\alpha$ and the function
$$\Gamma:2^{\overline \B}\times \I^+\to \prod_{n\in\mathbb{N}} (2^{\overline{\B_{\{n\}}}}\times \I^+\upharpoonright (\mathbb{N}\setminus n))$$
where $\Gamma(\F,x)=((\F_{\{n\}},x\setminus n))_{n\in \mathbb{N}}$ is Borel.
By the inductive hypothesis there is Borel function
$$S:\prod_{n\in\mathbb{N}} (2^{\overline{\B_{\{n\}}}}\times \I^+\upharpoonright (\mathbb{N}\setminus n))\to \prod_{n\in\mathbb{N}} (I^+\upharpoonright (\mathbb{N}\setminus n))$$
that satisfies the conclusion of the theorem for each coordinate. Denote the composition of $\Gamma$, $S$ and projection to $n$-th coordinate as $S_n$.

We define a sequence of Borel functions $\{H_n\}_{n<\omega}$.
For $(\F,x)\in 2^{\overline \B}\times \I^+$ define inductively
\begin{itemize}
\item $H_0(\F,x)=x$,
\item $H_{n+1}(\F,x)=S_{n+1}(\F,x)$ if $n\in x$ otherwise $H_{n+1}(\F,x)=H_{n}(\F,x)$.
\end{itemize}
Since $\I$ is uniformly selective,  we can extract, in a Borel way, from the sequence $\{H_n(\F,x)\}_{n<\omega}$ a set $y\in \I^+$ such that
\[
y/(n+1)\subseteq  H_{n+1}(\F,x)\;\; \mbox{for all $n\in y$.}
\]
Lemma \ref{1hom} naturally provides the notion of $i$-homogeneous for $\F$ for $i=0,1$. Let
\[
y_i=\{n\in y:\; \mbox{$H_{n+1}(\F,x)$ is $i$-homogeneous for $\F_{\{n+1\}}$}\}.
\]
Then  $y_i$ is $i$-homogeneous for $\F$. In fact, for $i=0$,
let $t$ be a finite subset of $y_0$ and let $n=\min(t)$. Then
$t/(n+1)\subseteq  H_{n+1}(\F,x)$ as $n\in y$. Therefore $t/(n+1)\not\in \F_{\{n+1\}}$, as $H_{n+1}(\F,x)$
is $0$-homogeneous. Thus $t=\{n\}\cup t/(n+1)\not\in\F$.  Using Lemma \ref{1hom}, a similar argument works for  $i=1$.

By Lemma \ref{1hom}, being $i$-homogeneous for $\F$ is a Borel property, therefore  the function $y\mapsto (y_0,y_1)$ is Borel.  Since $y\in \I^+$, then at least
one of the sets $y_0$ or $y_1$ belongs to $\I^+$.
Let $S(\F, x)=y_0$ if $y_0\in \I^+$ and $y_1$, otherwise. As
$\I^+$ is Borel, we can pick in a Borel way the  alternative that
holds. Thus $S$ is Borel. 
\qed

Since $\fin$ is uniformly selective (corollary \ref{fin-unifor-selective}), we get the uniform version of Nash-Williams' theorem.

\begin{corollary}
\label{uniform-galvin} Let $\B$ be a front on $\mathbb{N}$. There is a Borel map $S:2^{\B}\times [\mathbb{N}]^{<\omega}\rightarrow [\mathbb{N}]^{<\omega}$ such that $S(\F,x)$ is a $\F$-homogeneous subset of $x$, for all $x\in [\mathbb{N}]^{<\omega}$ and all $\F\subseteq \B$.
\end{corollary}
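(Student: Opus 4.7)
The plan is to deduce this as a direct specialization of Theorem \ref{Local-uniform-galvin} to the ideal $\fin$ of finite subsets of $\mathbb{N}$. By Corollary \ref{fin-unifor-selective}, $\fin$ is uniformly selective, and $\fin^+ = [\mathbb{N}]^{\omega}$, so the hypotheses of Theorem \ref{Local-uniform-galvin} are satisfied. (I will assume that the target of $S$ is $[\mathbb{N}]^{\omega}$ rather than $[\mathbb{N}]^{<\omega}$, which appears to be a typo: every infinite $\F$-homogeneous subset is required, and this is what Theorem \ref{Local-uniform-galvin} produces.)

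Taking $z=\mathbb{N}$ and $\I=\fin$ in Theorem \ref{Local-uniform-galvin}, one obtains a Borel map
\[
S_0: 2^{\overline{\B}} \times [\mathbb{N}]^{\omega} \longrightarrow [\mathbb{N}]^{\omega}
\]
such that for every $\F\subseteq\overline{\B}$ and every infinite $x\subseteq\mathbb{N}$, $S_0(\F,x)$ is an infinite $\F$-homogeneous subset of $x$. Since every $\F\subseteq\B$ is in particular a subset of $\overline{\B}$ (as $\B\subseteq\overline{\B}$), there is a natural continuous inclusion $\iota:2^{\B}\hookrightarrow 2^{\overline{\B}}$ sending $\F$ to itself, viewed as a subset of $\overline{\B}$. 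Define
\[
S(\F,x) := S_0(\iota(\F),x)
\]
for $(\F,x)\in 2^{\B}\times[\mathbb{N}]^{\omega}$. Since $\iota$ is continuous and $S_0$ is Borel, $S$ is Borel. Moreover, $\F$-homogeneity depends only on the underlying subset of $[\mathbb{N}]^{<\omega}$, so $S(\F,x)$ is an infinite $\F$-homogeneous subset of $x$, as required.

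There is no real obstacle here; the content of the corollary is entirely contained in Theorem \ref{Local-uniform-galvin}, and all that needs to be checked is the triviality that $\fin$ fits the hypothesis (which is Corollary \ref{fin-unifor-selective}) together with the harmless identification of $2^{\B}$ as a continuous subspace of $2^{\overline{\B}}$.
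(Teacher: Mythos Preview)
Your proof is correct and matches the paper's own argument: the corollary is stated immediately after Theorem \ref{Local-uniform-galvin} and is derived simply by invoking Corollary \ref{fin-unifor-selective} to note that $\fin$ is uniformly selective. Your additional remarks about the typo $[\mathbb{N}]^{<\omega}$ versus $[\mathbb{N}]^{\omega}$ and the continuous inclusion $2^{\B}\hookrightarrow 2^{\overline{\B}}$ are accurate and merely make explicit points the paper leaves implicit.
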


\medskip

Using the front $[\mathbb{N}]^{n}$,  we get that the classical Ramsey's
theorem holds uniformly (the case $n=2$ appeared in
\cite{HMTU2017}).

\begin{corollary}
\label{efective-Ramsey} For each $n\in\mathbb{N}$, there is a Borel
function $S: 2^{[\mathbb{N}]^{n}}\times[\mathbb{N}]^{<\omega}\rightarrow [\mathbb{N}]^{<\omega}$ such that $S(\F, x)$ is an infinite subset of $x$ homogeneous for $\F\subseteq [\mathbb{N}]^{n}$.
\end{corollary}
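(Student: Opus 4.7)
The plan is to apply Corollary \ref{uniform-galvin} with the choice of front $\B = [\mathbb{N}]^{n}$. First I would check that $[\mathbb{N}]^{n}$ is indeed a front on $\mathbb{N}$: any two distinct $n$-element subsets are $\sqsubseteq$-incomparable, since a proper initial segment of an $n$-element set has fewer than $n$ elements; and every infinite $y \subseteq \mathbb{N}$ has an initial segment in $[\mathbb{N}]^{n}$, namely the set of its first $n$ elements.

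Given $\F \subseteq [\mathbb{N}]^{n} = \B$, Corollary \ref{uniform-galvin} supplies a Borel map producing an $\F$-homogeneous subset of the input. It only remains to observe that $\F$-homogeneity in the Nash-Williams sense coincides, for $\F \subseteq [\mathbb{N}]^{n}$, with classical Ramsey homogeneity: the first alternative $[y]^{<\omega} \cap \B \subseteq \F$ becomes $[y]^{n} \subseteq \F$, while the second alternative $[y]^{<\omega} \cap \F = \emptyset$ is equivalent to $[y]^{n} \cap \F = \emptyset$ because $\F$ is already concentrated on $n$-element sets.

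No real obstacle arises; the entire content of the corollary is packaged inside Corollary \ref{uniform-galvin}, and the present statement is obtained by specializing the front. The only point worth mentioning explicitly is that the selector delivers an infinite subset when the input $x$ is infinite: this is built into the construction of Theorem \ref{Local-uniform-galvin} applied with $\I = \fin$, which is uniformly selective by Corollary \ref{fin-unifor-selective}, so the output lies in $\fin^+$, i.e.\ is infinite.
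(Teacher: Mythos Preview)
Your proposal is correct and follows exactly the paper's approach: the paper's proof is the single remark that $[\mathbb{N}]^{n}$ is a front, so the result is a specialization of Corollary~\ref{uniform-galvin}. Your additional verifications (that $[\mathbb{N}]^{n}$ is a front and that Nash--Williams homogeneity for $\F\subseteq[\mathbb{N}]^{n}$ agrees with classical Ramsey homogeneity) are sound and simply make explicit what the paper leaves implicit.
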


\section{Negative results}

In this section we show that there is a tall $F_{\sigma}$ ideal without a Borel selector and deduce from this fact that there is no uniform version of Galvin's theorem. We also show that there is a $\mathbf\Pi^1_2$ tall ideal $\I$ such that $hom(\mathcal{F})\not\subseteq \I$ for every $\mathcal{F}\subseteq [\mathbb{N}]^{<\omega}$.

\subsection{A $F_\sigma$ ideal without a selector and no uniform version of Galvin's theorem}

Recall that the hyperspace $K(2^{\mathbb{N}})$ serves  as a space of codes for $F_\sigma$ ideals (see Proposition \ref{prop1}).
In \cite{GrebikHrusak2017} it is proved that the set of codes of tall $F_\sigma$ ideals is $\mathbf\Pi^1_2-$complete.
To show that there is an $F_\sigma$ ideal without a selector we prove that the complexity of the set of codes of $F_\sigma$ ideals with a Borel selector is $\mathbf\Sigma^1_2$.

We start by modifying a bit the notion of tallness and  Borel selector. For $K\in K(2^\mathbb{N})$, let 
$$
\downarrow K=\{x:\exists y\in K \ x\subseteq y\}.
$$
\begin{definition}
We say that $K\in K(2^\mathbb{N})$ is pseudo-tall if for every infinite $x\in 2^\mathbb{N}$ there is infinite $y\in \downarrow K$ such that $y\subseteq x$. 
\end{definition}

One can verify that as a function $\downarrow:K(2^\mathbb{N})\to K(2^\mathbb{N})$ is continuous and $K$ is pseudo-tall if and only if $\mathcal{I}_K$ is tall.

\begin{proposition}\cite{GrebikHrusak2017}
\label{pseudo1}
Given $K\in K(2^\mathbb{N})$, there is a Borel function $\phi:\mathcal{I}_K \to K^{<\omega}$ such that $x\subseteq\bigcup\phi(x)$.
\end{proposition}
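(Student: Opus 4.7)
The plan is to stratify $\mathcal{I}_K$ by the minimum number of members of $K$ needed to cover $x$, and on each stratum perform a Borel uniformization of a closed relation with compact fibres.

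First, for each $n\geq 0$ I would define
\[
R_n=\bigl\{(x,(y_0,\dots,y_{n-1}))\in 2^{\mathbb{N}}\times K^n:x\subseteq {\textstyle\bigcup_{i<n}y_i}\bigr\}.
\]
The defining condition is a conjunction over $m\in\mathbb{N}$ of the clopen conditions ``$m\in x\Rightarrow m\in\bigcup_{i<n}y_i$'', so $R_n$ is closed in $2^{\mathbb{N}}\times K^n$. Since $K^n$ is compact, the projection $A_n\subseteq 2^{\mathbb{N}}$ of $R_n$ to the first coordinate is closed as well. By the very definition of the ideal generated by $K$, $\mathcal{I}_K=\bigcup_n A_n$, and the sets $B_n=A_n\setminus A_{n-1}$ (with $A_{-1}=\emptyset$) form a Borel partition of $\mathcal{I}_K$.

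Next, for each $n$ and $x\in A_n$, the fibre $T_n(x)=\{(y_i)\in K^n:x\subseteq\bigcup_i y_i\}$ is a nonempty compact subset of $K^n$. I would define $\phi_n(x)\in K^n$ as the lexicographically least element of $T_n(x)$, computed coordinate by coordinate and bit by bit: at each step one asks whether fixing the next bit of the current coordinate to $0$ keeps the corresponding section of $T_n(x)$ nonempty. This amounts to intersecting $T_n(x)$ with a clopen cylinder of $K^n$, giving a closed subset of a compact space; its projection onto $2^{\mathbb{N}}$ is therefore closed in $x$, so the question has a Borel (indeed closed) answer. Hence $\phi_n:A_n\to K^n$ is Borel and $x\subseteq\bigcup\phi_n(x)$.

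Finally, put $\phi(x)=\phi_n(x)$ for $x\in B_n$. This is Borel because the $B_n$ form a Borel partition and each $\phi_n$ is Borel, and $x\subseteq\bigcup\phi(x)$ by construction. The only step requiring care is the Borel selection from the compact fibres $T_n(x)$; once that is in hand the stratification and reassembly are routine, and no assumption on $K$ beyond closedness is used.
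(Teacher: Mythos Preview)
Your argument is correct. The paper does not actually supply a proof of this proposition; it is quoted from \cite{GrebikHrusak2017} without argument, so there is nothing to compare against directly. Your stratify-and-uniformize approach is the natural one: the sets $A_n$ are closed by compactness of $K^n$, the fibres $T_n(x)$ are nonempty compact, and selecting the lexicographic minimum bit by bit gives a Borel uniformization (alternatively, one could simply invoke the large-section uniformization theorem for Borel sets with compact sections, \cite[Theorem 28.8]{Kechris94}, which the paper itself cites later in a related context). One cosmetic remark: the paper's displayed definition of $\mathcal{I}_K$ contains a typo (the inclusion is written the wrong way round); your formulation $x\subseteq\bigcup_{i<n}y_i$ is the intended one.
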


\begin{proposition}
Let $K\in K(2^\mathbb{N})$ be pseudo-tall. Then $\mathcal{I}_K$ has a Borel selector $S$ if and only if there is a Borel selector $S'$ such that $rng(S')\subseteq \downarrow K$.
\end{proposition}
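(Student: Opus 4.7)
The plan splits into the two directions. For the ``if'' direction there is nothing to show, because $\downarrow K \subseteq \mathcal{I}_K$, so any Borel selector $S'$ with range in $\downarrow K$ is automatically a Borel selector for $\mathcal{I}_K$. All the content is therefore in the forward direction: starting from an arbitrary Borel selector $S$ for $\mathcal{I}_K$, I will refine it so that its output lies inside a single member of $K$.

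The construction will use Proposition \ref{pseudo1}. For infinite $x$, the value $S(x)$ is an infinite element of $\mathcal{I}_K$, and $\phi(S(x)) = (y_0,\dots,y_{n-1}) \in K^{<\omega}$ satisfies $S(x) \subseteq \bigcup_{i<n} y_i$. Since $S(x)$ is infinite, the pigeonhole principle forces at least one of the sets $S(x) \cap y_i$ to be infinite; I let $i(x)$ be the least such index and define $S'(x) = S(x) \cap y_{i(x)}$. For finite $x$ I set $S'(x) = \emptyset$, which belongs to $\downarrow K$ since pseudo-tallness of $K$ forces $K \neq \emptyset$. By construction $S'(x) \subseteq y_{i(x)} \in K$, so $S'(x) \in \downarrow K$; moreover $S'(x) \subseteq S(x) \subseteq x$, and $S'(x)$ is infinite whenever $x$ is.

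The only point requiring care is Borelness of $S'$. Here $S$ is Borel by hypothesis, $\mathcal{I}_K$ is $F_\sigma$ hence Borel in $2^\mathbb{N}$, $\phi$ is Borel on $\mathcal{I}_K$ by Proposition \ref{pseudo1}, coordinate projections on $K^{<\omega}$ are continuous, and the predicate ``$S(x) \cap y_i$ is infinite'' is Borel; hence $i(x)$ is a Borel function on the Borel set of infinite $x$, and $S'$ is obtained by composing and combining these Borel ingredients. I do not expect any genuine obstacle: the proposition amounts to the observation that a Borel selector can always be trimmed, in a Borel way, so that its values are trapped inside a single closed-set generator provided by $\phi$.
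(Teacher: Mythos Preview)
Your proof is correct and follows essentially the same approach as the paper: use Proposition~\ref{pseudo1} to cover $S(x)$ by finitely many members of $K$, then pick in a Borel way one whose intersection with $S(x)$ is infinite. The paper's proof is a one-sentence sketch (which even writes $\phi(x)$ where $\phi(S(x))$ is meant), and you have simply filled in the details it omits.
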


\begin{proof}
Using Proposition \ref{pseudo1}, it is enough to realize that if $x$ is infinite then at least one set in $\phi(x)$ must have infinite intersection with $x$ and since $\phi(x)$ is finite we can pick such a set in a Borel way.
\end{proof}

This leads to a modified definition of a selector.
\begin{definition}
Let $K\in K(2^\mathbb{N})$ be a pseudo-tall. We say that $K$ has a Borel \emph{pseudo-selector} if there is a Borel function $S:2^\mathbb{N}\to2^\mathbb{N}$ such that
\begin{itemize}
\item $S(x)\in\downarrow K$,
\item if $|x|=\omega$ then $|S(x)|=\omega$,
\item $S(x)\subseteq x$.
\end{itemize}
\end{definition}

By the previous proposition, $K\in K(2^\mathbb{N})$ has a pseudo-selector if and only if $\mathcal{I}_K$ has a selector and therefore it suffices to consider only  pseudo-selectors of closed subsets of $2^{\mathbb{N}}$, in other words the questions of existence of a Borel selector for $F_\sigma$ ideals and hereditary tall closed subsets of $2^\mathbb{N}$ are equivalent.
Let us summarize this in the following proposition.

\begin{proposition}
Let $K\in K(2^{\mathbb{N}})$ be tall. The following are equivalent:
\begin{itemize}
\item there is a Borel selector for $K$,
\item there is a Borel pseudo-selector for $K$,
\item the $F_\sigma$ ideal $\mathcal{I}_K$ has a Borel selector,
\item the smallest ideal $\mathcal{I}$ that contains $K$ and $\fin$ has a Borel selector.
\end{itemize}
\end{proposition}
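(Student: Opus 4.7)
My plan is to establish the forward chain $(1) \Rightarrow (2) \Rightarrow (3) \Rightarrow (4)$, which is immediate, and then close the loop with $(4) \Rightarrow (3) \Rightarrow (2) \Rightarrow (1)$. The forward chain follows from the containments
\[
K \;\subseteq\; \downarrow K \;\subseteq\; \mathcal{I}_K \;\subseteq\; \mathcal{I},
\]
since any Borel selector witnessing condition $(i)$ continues to witness condition $(i{+}1)$ without modification.

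The implication $(3) \Rightarrow (2)$ is the content of the preceding proposition: given a Borel selector $S$ for $\mathcal{I}_K$ and the Borel map $\phi: \mathcal{I}_K \to K^{<\omega}$ of Proposition \ref{pseudo1}, one Borel-ly picks the first entry of $\phi(S(x))$ whose intersection with $x$ is infinite and intersects it with $x$; the resulting function takes values in $\downarrow K$.

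For $(2) \Rightarrow (1)$, the paragraph preceding the proposition makes clear that the question reduces to the case of hereditary tall closed $K$: conditions $(2)$, $(3)$, $(4)$ depend only on $\downarrow K$ (since $\mathcal{I}_{\downarrow K} = \mathcal{I}_K$, the ideal generated by $K \cup \fin$ equals the one generated by $\downarrow K \cup \fin$, and $\downarrow\downarrow K = \downarrow K$), so we may replace $K$ by $\downarrow K$ and assume $K$ is hereditary. In that case $K = \downarrow K$, and a pseudo-selector is literally a selector.

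The only implication requiring real work is $(4) \Rightarrow (3)$. Set
\[
K' \;:=\; K \;\cup\; \bigl\{\{n\} : n\in\mathbb{N}\bigr\} \;\cup\; \{\emptyset\}.
\]
This is closed in $2^\mathbb{N}$ because the only accumulation point of the singletons is $\emptyset$, and since $K'$ contains generators of both $K$ and $\fin$, one has $\mathcal{I}_{K'} = \mathcal{I}$. Proposition \ref{pseudo1} applied to $K'$ yields a Borel map $\phi: \mathcal{I} \to (K')^{<\omega}$ with $x \subseteq \bigcup \phi(x)$. Given a Borel selector $S$ for $\mathcal{I}$ and any $x \in 2^\mathbb{N}$, write $\phi(S(x)) = (y_0,\ldots,y_{m-1})$ and Borel-ly partition the entries into those in $K$ and those in $\{\emptyset\} \cup \{\{n\} : n\in\mathbb{N}\}$ (a Borel partition since both sets are closed subsets of $2^\mathbb{N}$). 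Let $F_x$ be the finite union of the entries of the second kind and define
\[
S'(x) \;:=\; S(x) \setminus F_x.
\]
Then $S'(x) \subseteq x$, $S'(x)$ is cofinite in $S(x)$ (hence infinite whenever $S(x)$ is), and $S'(x)$ is contained in the finite union of the $K$-entries of $\phi(S(x))$, so $S'(x) \in \mathcal{I}_K$ by hereditariness.

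The principal technical point is the Borel decomposition of $\phi(S(x))$ into its $K$-part and $\fin$-part in $(4) \Rightarrow (3)$; the remainder of the proof is a routine application of the containments and of previously established propositions.
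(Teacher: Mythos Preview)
Your forward chain, your $(3)\Rightarrow(2)$, and your $(4)\Rightarrow(3)$ are all fine; in fact your treatment of $(4)\Rightarrow(3)$ via $K'=K\cup\{\{n\}:n\in\mathbb{N}\}\cup\{\emptyset\}$ is more explicit than the paper, which simply remarks that the ideal $\mathcal{I}$ is again $F_\sigma$ and appeals to the earlier propositions.

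The gap is in $(2)\Rightarrow(1)$. Your reduction ``replace $K$ by $\downarrow K$'' is circular. You correctly observe that conditions (2), (3), (4) depend only on $\downarrow K$, but condition (1) does not: a Borel selector for $K$ must take values in $K$, not merely in $\downarrow K$. Proving (1) for the hereditary set $\downarrow K$ means producing a Borel map into $\downarrow K$, which is precisely condition (2) for the original $K$ --- your hypothesis. So after the replacement you have shown $(2)\Rightarrow(2)$, not $(2)\Rightarrow(1)$. The proposition is stated for an arbitrary tall closed $K$, and for non-hereditary $K$ the passage from a map into $\downarrow K$ to a map into $K$ is exactly the content of this implication.

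The paper handles $(2)\Rightarrow(1)$ differently: starting from a Borel pseudo-selector $S$, it considers the Borel relation
\[
\{(x,y)\in 2^{\mathbb{N}}\times 2^{\mathbb{N}}: S(x)\subseteq y\subseteq x,\ y\in K\},
\]
observes that each vertical section is compact, and invokes the classical uniformization theorem for Borel sets with compact sections (Kechris, Theorem~35.46). The uniformizing function is then a Borel selector for $K$, since any $y$ in the section contains the infinite set $S(x)$ and lies in $K\cap\mathcal{P}(x)$. This is the missing idea in your argument: one genuinely needs a selection principle to land inside $K$ rather than $\downarrow K$.
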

\begin{proof}
It can be easily verified that the ideal $\mathcal{I}$ in the fourth condition is also $F_\sigma$. The only implication that is not clear from previou arguments is how to get a Borel selector from a Borel pseudo-selector.

Let $S:2^{\mathbb{N}}\to\mathbb{N}$ be a Borel pseudo-selector for $K$. Define
$$
\{(x,y):S(x)\subseteq y\subseteq x, \ y\in K\}\subseteq 2^{\mathbb{N}}\times \mathbb{N}.
$$
This is a Borel set with each vertical section compact and therefore it has a Borel uniformization by a classical uniformization theorem (see, for instance, \cite[Theorem 35.46]{Kechris94}). The uniformizing function  is a Borel selector for K.

\end{proof}

\subsubsection{Coding of Borel functions}

Now we are going to present how to code Borel functions. For that end, first we need to code Borel sets.  This coding is somewhat standard  (see for instance \cite[pag. 19]{Gao}), but we need to present it with some detail.  We define a set of labeled well-founded trees which will be the codes of Borel sets. 

\begin{definition}
Let $\mathcal{L}T$ be the set of all trees on $\mathbb{N}$ where each node is labeled by an element of $\{0,1\}$.
\end{definition}

So, formally, every element of $\mathcal{L}T$ is a tuple $(T,f)$ where $T\subseteq \mathbb{N}^{<\omega}$ is a tree and $f:T\to 2$. However, we will always write only $T\in \mathcal{L}T$ and $(s,i)\in T$ meaning that $f(s)=i$.

One can easily check that there $\mathcal{L}T$ is a closed subset of the Polish space of all trees on $\mathbb{N}\times 2$, thus $\mathcal{L}T$ is a Polish space. Moreover,  the set of all well-founded labeled trees $WF\mathcal{L}T$ is $\mathbf\Pi^1_1$.

We are interested in a closed subspace of $\mathcal{L}T$ which will  contain all codes for Borel subsets of $2^\mathbb{N}$.

\begin{definition}
Let $\mathcal{L}T_c\subseteq \mathcal{L}T$ be the set of all labeled trees satisfying the following condition.
\begin{itemize}
\item if $(s,1)\in T$ then $(s^\frown (0),0)\in T$ and it is the only immediate successor of $(s,1)$.
\end{itemize}
\end{definition}

One can easily verify that $\mathcal{L}T_c$ is a closed subspace of $\mathcal{L}T$ and the set of well-founded trees $WF\mathcal{L}T_c\subseteq \mathcal{L}T_c$ is $\mathbf\Pi^1_1$.

Now we will define, for each $T\in WF\mathcal{L}T_c$, the Borel set $A_T$ coded by $T$. And conversely, for each Borel set $A\subseteq \cantor$  there will be a $T\in WF\mathcal{L}T_c$ such that $A=A_T$.  The definition of $A_T$ is by recursion on the rank of $T$. 

Let $\{t_n:\;n\in\mathbb{N}\}$ be an enumeration of all basic open sets of $2^\mathbb{N}$, i.e. each $t_n$ is  a finite binary sequence.  Recursively define what each $(s,i)\in T$ codes:
\begin{itemize}
\item if $(s,0)$ is a leaf then it codes the basic open set $t_{s(|s|-1)}$ (in the case of $s=\emptyset$, we put $t_{\emptyset(|\emptyset|-1)}=t_0$),
\item if $(s,0)$ is not a leaf, then it codes the union of the sets coded by  $(s^\frown n,i)$ where $(s^\frown n,i)\in T$,
\item $(s,1)$ codes the complement of what $(s^\frown(0),0)$ codes.
\end{itemize}
Finally, $A_T$ is the set coded by $(\emptyset,i)$. 

\begin{lemma}
\label{coding}
For every Borel set $A\subseteq \cantor$ there is $T\in WF\mathcal{L}T_c$ such that $A=A_T$. And conversely, $A_T$ is Borel for each $T\in WF\mathcal{L}T_c$. 
\end{lemma}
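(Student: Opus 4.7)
The plan is to prove the two directions separately by standard inductions on trees and on the Borel hierarchy.

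For the converse direction, I would proceed by transfinite induction on the rank of $T \in WF\mathcal{L}T_c$, showing that every set coded by a node of $T$ is Borel (and in particular $A_T = $ the set coded by the root). At a leaf $(s,0)$, the coded set is a basic open set, hence Borel. At an internal $(s,0)$, the coded set is the countable union of the sets coded by the $(s^\frown n, i) \in T$; each of these lies at a subtree of strictly smaller rank and is Borel by the inductive hypothesis, so the union is Borel. At a $(s,1)$, the coded set is the complement of the (Borel, by induction) set coded by $(s^\frown(0),0)$, hence Borel.

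For the forward direction, let $\mathcal{B}^* = \{A_T : T \in WF\mathcal{L}T_c\}$. I would show that $\mathcal{B}^*$ is a $\sigma$-algebra on $2^\mathbb{N}$ containing every basic open set $t_n$; since the Borel $\sigma$-algebra is the smallest such, this yields $\mathrm{Borel} \subseteq \mathcal{B}^*$. Each $t_n$ is coded by the tree whose nodes are $(\emptyset,0)$ and $(\langle n \rangle, 0)$ with the latter a leaf, so $t_n \in \mathcal{B}^*$. For closure under complements, given $T$ coding $B$, form $T'$ with root $(\emptyset,1)$, unique successor $(\langle 0\rangle, 0)$, and below $(\langle 0\rangle, 0)$ a prefix-shifted copy of $T$ (every $s \in T$ replaced by $\langle 0 \rangle ^\frown s$, keeping labels). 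Then $T' \in WF\mathcal{L}T_c$ and $A_{T'} = 2^\mathbb{N} \setminus B$. For closure under countable unions, given $(T_n)_n$ coding $(B_n)_n$, form a root $(\emptyset, 0)$ and graft at each child $\langle n \rangle$ a prefix-shifted copy of $T_n$; the resulting tree lies in $WF\mathcal{L}T_c$ and codes $\bigcup_n B_n$.

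The only point requiring minor care is verifying that the grafting operations preserve both well-foundedness and the structural constraint defining $\mathcal{L}T_c$. Well-foundedness is preserved because any infinite branch in the new tree would, after at most one step past the new root, descend entirely into a single grafted copy, contradicting its well-foundedness. The structural constraint is preserved because grafting leaves the children-label pattern of each old node unchanged, and the new root is arranged by construction to satisfy it (label $0$ in the union case; label $1$ with unique $0$-labeled child in the complement case). There is no deeper obstacle in the argument; the content is purely the familiar generation of the Borel hierarchy from basic opens via countable unions and complements, translated faithfully into the tree-coding framework.
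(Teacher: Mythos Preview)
Your argument is correct and follows exactly the approach the paper sketches: transfinite induction on the rank of $T$ for one direction, and induction on Borel complexity (which you phrase, equivalently, as showing that $\{A_T : T \in WF\mathcal{L}T_c\}$ is a $\sigma$-algebra containing the basic open sets) for the other. The paper's own proof is a two-line sketch, so you have supplied more detail than the original; the only cosmetic point is that in your complement construction the grafted root of $T$ may carry label $1$, in which case one should insert one extra $0$-labeled node before grafting so that the $\mathcal{L}T_c$ constraint at the new root $(\emptyset,1)$ is met.
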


\begin{proof}
Given $T\in WF\mathcal{L}T_c$, one easily shows for induction on the rank of $T$ that  $A_T$ is Borel. Conversely, given a Borel set $A\subseteq \cantor$, by induction on the Borel complexity of $A$ it is easy to construct a $T\in WF\mathcal{L}T_c$ such that $A=A_T$ 
\end{proof}

Let  $\mathcal{C}_i\subseteq 2^\mathbb{N}\times \mathcal{L}T_c$, $i\in 2$,  be given by 
\[
(x,T)\in\mathcal{C}_1\;\mbox{if and only if $T\in WF\mathcal{L}T_c$ and $x\in A_T$} 
\]
and
\[
(x,T)\in\mathcal{C}_0\;\mbox{if and only if $T\in WF\mathcal{L}T_c$ and $x\not\in A_T$.} 
\]

The following is a crucial result.

\begin{lemma}
\label{CDcoanalytic}
The relation $\mathcal{C}_i$ is $\mathbf \Pi^1_1$ for $i\in 2$.
\end{lemma}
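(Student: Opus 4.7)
The plan is to express $(x,T) \in \mathcal{C}_i$ as a conjunction of two $\mathbf{\Pi}^1_1$ conditions: well-foundedness of $T$, and a universal quantifier over ``valid evaluations'' of the recursive coding.

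Concretely, define for $(x,T,f) \in 2^{\mathbb{N}} \times \mathcal{L}T_c \times 2^{\mathbb{N}^{<\omega}}$ a predicate $V(x,T,f)$ expressing that $f(s)$ correctly records, for every $s \in T$, whether $x$ lies in the set coded by the labelled node $(s,i)$ (where $i$ is the label of $s$ in $T$). Precisely, $V(x,T,f)$ is the countable conjunction over $s \in \mathbb{N}^{<\omega}$ of the three implications:
\begin{itemize}
\item if $(s,0) \in T$ is a leaf of $T$, then $f(s)=1 \Leftrightarrow x \in t_{s(|s|-1)}$ (with the paper's convention $t_{\emptyset(|\emptyset|-1)} = t_0$);
\item if $(s,0) \in T$ is not a leaf, then $f(s)=1 \Leftrightarrow \exists n\,(s^\frown n \in T \wedge f(s^\frown n)=1)$;
\item if $(s,1) \in T$, then $f(s)=1 \Leftrightarrow f(s^\frown (0))=0$.
\end{itemize}
For each fixed $s$, each antecedent is clopen or open in $T$, and each consequent is Borel in $(x,T,f)$, so each implication is Borel. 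Therefore $V$ is Borel as a countable conjunction of Borel conditions.

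The next step is to show, by induction on the rank of $T \in WF\mathcal{L}T_c$, that there is a unique $f$ with $V(x,T,f)$, and that this $f$ satisfies $f(\emptyset)=1$ iff $x \in A_T$. Leaves force $f$ via the first clause; label-$0$ interior nodes pin $f(s)$ to the disjunction over existing children; label-$1$ nodes have exactly one child $(s^\frown (0),0) \in T$ by the defining condition of $\mathcal{L}T_c$, so the third clause determines $f(s)$. Well-foundedness ensures the induction closes, and one verifies directly that the unique $f$ has $f(\emptyset)$ equal to the indicator of $x \in A_T$. Hence for $T \in WF\mathcal{L}T_c$,
\[
x \in A_T \iff \forall f \in 2^{\mathbb{N}^{<\omega}}\,\bigl(V(x,T,f) \Rightarrow f(\emptyset)=1\bigr),
\]
and likewise $x \notin A_T$ iff $\forall f\,(V(x,T,f) \Rightarrow f(\emptyset)=0)$.

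Putting this together: $(x,T) \in \mathcal{C}_1$ iff $T \in WF\mathcal{L}T_c$ and $\forall f\,(V(x,T,f) \Rightarrow f(\emptyset)=1)$, with the analogous statement for $\mathcal{C}_0$. The first conjunct is $\mathbf{\Pi}^1_1$ by the classical fact that well-foundedness of trees is $\mathbf{\Pi}^1_1$; the second is a universal quantifier over a Polish space of a Borel matrix, and so also $\mathbf{\Pi}^1_1$. A conjunction of two $\mathbf{\Pi}^1_1$ sets is $\mathbf{\Pi}^1_1$, completing the proof. The only delicate point is the bookkeeping around the root, in particular checking that $f(\emptyset)$ encodes membership in $A_T$ whether $\emptyset$ carries label $0$ or $1$, and treating the degenerate case $\emptyset \notin T$; these are routine once the clauses of $V$ are set up as above.
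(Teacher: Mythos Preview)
Your proof is correct and follows essentially the same approach as the paper's: both define a Borel ``evaluation'' relation (your $V$, the paper's $G \subseteq 2^{\mathbb{N}}\times\mathcal{L}T_c\times\mathcal{L}T$), argue by induction on rank that evaluations are determined on well-founded trees, and then express $\mathcal{C}_i$ as the conjunction of $T\in WF\mathcal{L}T_c$ with a universal real quantifier over a Borel matrix. One minor imprecision: since your $f$ ranges over all of $2^{\mathbb{N}^{<\omega}}$, it is not literally unique (its values off $T$ are unconstrained), but what you actually need and use is that $f\upharpoonright T$---and in particular $f(\emptyset)$---is determined, so the displayed $\forall f$ equivalence is valid as written.
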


For the proof we need some auxiliary results.  We define the following subset $G\subseteq 2^\mathbb{N}\times\mathcal{L}T_c\times\mathcal{L}T$.

\begin{definition}\label{G}
A triple $(x,T,S)$ is in $G\subseteq 2^\mathbb{N}\times\mathcal{L}T_c\times\mathcal{L}T$ if and only if
\begin{itemize}
\item $(s,i)\in T$ for some $i\in 2$ if and only if $(s,j)\in S$ for some $j\in 2$,
\item if $(s,0)\in T$ is leaf then $(s,1)\in S$ if and only if $t_{s(|s|-1)}\sqsubseteq x$,
\item if $(s,1)\in T$ then $(s,1)\in S$ if and only if $(s^\frown (0),0)\in S$,
\item if $(s,0)\in T$ not a leaf then $(s,1)\in S$ if and only if there is $n\in \mathbb{N}$ such that $(s^\frown n, 1)\in S$.
\end{itemize}
\end{definition}
Note that if $(x,T,S)\in G$ then $S$ has the same tree structure as $T$, it only has different labeling.
Also note that if $T$ is well-founded then the labeling of $S$ is uniquely determined by the values on its leafs.
This can be proved by induction on the rank of $S$. Since the label of the  leafs of $S$ are uniquely determined by $(x,T)$, we can conclude that for each $T\in WF\mathcal{L}T_c$ and every $x\in 2^{\mathbb{N}}$ there is exactly one $S$ such that $(x,T,S)\in G$.

\begin{claim}
The set $G$ is Borel.
\end{claim}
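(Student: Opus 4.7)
The plan is to express $G$ as a countable intersection of Borel sets, indexed by finite sequences $s\in\mathbb{N}^{<\omega}$. Each of the four bulleted clauses in Definition~\ref{G} is implicitly a universal statement ranging over all $s$, so it suffices to verify that for each fixed $s$ the clause cuts out a Borel subset of $2^\mathbb{N}\times\mathcal{L}T_c\times\mathcal{L}T$; since $\mathbb{N}^{<\omega}$ is countable, intersecting over $s$ then yields a Borel set.

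For the basic building blocks, I would first note that for every $s\in\mathbb{N}^{<\omega}$ and every $i\in\{0,1\}$ the events $\{T:(s,i)\in T\}$ and $\{S:(s,i)\in S\}$ are clopen in $\mathcal{L}T$, and $\{x:t_{s(|s|-1)}\sqsubseteq x\}$ is clopen in $2^\mathbb{N}$. The event ``$(s,0)$ is a leaf of $T$'' equals $\{(s,0)\in T\}\cap\bigcap_{n\in\mathbb{N},\,j\in 2}\{(s^\frown n,j)\notin T\}$, a countable intersection of clopens and hence closed; its complement, ``$(s,0)$ is not a leaf,'' is open.

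With these in hand, each clause at a fixed $s$ is a boolean combination of the building blocks, possibly with one countable union over $n\in\mathbb{N}$ attached. Clause~(1) is a biconditional of two clopen events, hence clopen. Clause~(3) has a clopen hypothesis and a clopen biconditional as conclusion, hence is clopen. Clause~(2) has a closed hypothesis and a clopen conclusion, so its implication form $\neg A\cup B$ is open. Clause~(4) has an open hypothesis and a conclusion which is a biconditional of a clopen event with the open event $\exists n\,(s^\frown n,1)\in S$; this is a Borel set. Intersecting over all $s\in\mathbb{N}^{<\omega}$ realizes $G$ as a countable intersection of Borel sets.

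There is no genuine obstacle here, only careful bookkeeping of pointclasses. The only mild subtlety is clause~(4), where the inner existential over $n$ prevents the right side of the inner biconditional from being clopen; but a biconditional of two open sets is still Borel, equal to $(A\cap B)\cup(\neg A\cap\neg B)$, and that is all that is needed.
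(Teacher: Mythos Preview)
Your proof is correct and follows essentially the same approach as the paper: both arguments verify clause by clause that each of the four conditions in Definition~\ref{G} cuts out a Borel set for each fixed $s$, and then intersect over the countable index set $\mathbb{N}^{<\omega}$. Your pointclass bookkeeping is slightly more explicit than the paper's (which simply names the auxiliary sets $P_s$, $Q_s$, $Q'_s$ and asserts their complexity), but the underlying reasoning is the same.
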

\begin{proof}
We verify that each condition is Borel.
The first and the third conditions are independent of the first coordinate and are closed.

For the second condition. Let $P_s:=\{T\in \mathcal{L}T_c:\ s\mbox{ is a leaf of $T$}\}$ and $Q_s:=\{T\in \mathcal{L}T:(s,1)\in T\}$ for each $s\in \mathbb{N}^{<\omega}$. Then $P_s$ and $Q_s$ are easily seen to be closed. Define 
$$
R_s:=(2^\mathbb{N}\times (\mathcal{L}T_c\setminus P_s)\times \mathcal{L}T)\cup (t_{s(|s|-1)}\times P_s\times Q_s)\cup ((2^\mathbb{N}\setminus t_{s(|s|-1)})\times P_s\times (\mathcal{L}T\setminus Q_s)).
$$
Then  $\bigcap_{s\in \mathbb{N}^{<\omega}}R_s$ is the collection of all $(x,T,S)$ satisfying the second condition. 

The fourth condition is also independent of the first coordinate and one can verify that 
$$
Q'_s:=\{S\in \mathcal{L}T:(s,1)\in T\iff \exists n\in \mathbb{N} (s^\frown(n),1)\in S\}
$$
is Borel. Combination of $P_s$, $Q'_s$ and their complements gives us the desired result.  
\end{proof}

For each $(s,i)\in T$, let $T_{(s,i)}:=\{(t,j):(s^\frown t,j)\in T\}$. Consider the following continuous bijection $\Gamma:\mathcal{L}T_c\to\mathcal{L}T_c$ where 
\begin{itemize}
\item if $(\emptyset,0)\in T$ then $\Gamma(T)=R$ where $(\emptyset,1)\in R$ and $T_{(\emptyset,0)}=R_{((0),0)}$,
\item if $(\emptyset,1)\in T$ then $\Gamma(T)=R$ where $(\emptyset,0)\in R$ and $T_{((0),0))}=R_{(\emptyset,0)}$.
\end{itemize}
In other words, $\Gamma\upharpoonright WF\mathcal{L}T_c$ is the bijection switching the codes for a set and its complement.

\begin{claim}
\label{unique}
Let $T\in WF\mathcal{L}T_c$ and $x\in 2^\mathbb{N}$ then $|\{S:(x,T,S)\in G\}|=1$ and for the unique $(x,T,S)\in G$ we have that  $(\emptyset,1)\in S$ if and only if $x$ is in the set coded by $T$. Moreover, let $(x,T,S),(x,\Gamma(T),S')\in G$, then $(\emptyset,1)$ is in $S$ or $S'$ but not in both of them.
\end{claim}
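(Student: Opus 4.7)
The plan is to prove all three assertions by induction on the rank of $T\in WF\mathcal{L}T_c$. The first assertion (uniqueness of $S$) was essentially noted in the discussion immediately preceding the claim: the first condition in Definition~\ref{G} forces $S$ to have the same tree structure as $T$, and on a well-founded tree the remaining three conditions determine the labels of $S$ from the leaves upward. To obtain the ``iff'' statement at the same time, I would strengthen the inductive hypothesis to: for each $T\in WF\mathcal{L}T_c$ and each $x\in 2^{\mathbb{N}}$, the unique $S$ with $(x,T,S)\in G$ satisfies $(\emptyset,1)\in S$ if and only if $x\in A_T$.

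The base case is $T=\{(\emptyset,0)\}$, a single leaf (it cannot be $(\emptyset,1)$ because that would require the successor $((0),0)\in T$). Here $S$ must also be a single node, and the second condition of Definition~\ref{G} forces $(\emptyset,1)\in S$ iff $t_0\sqsubseteq x$, which by the leaf convention $A_T=t_0$ is precisely $x\in A_T$. For the inductive step I would split on the root of $T$. If $(\emptyset,0)\in T$ is not a leaf, then the immediate subtrees $T_{(n,i_n)}$ all have strictly smaller rank; the inductive hypothesis yields unique labelings $S_n$ on them, and the fourth condition of Definition~\ref{G} forces $(\emptyset,1)\in S$ iff $(n,1)\in S$ for some $n$, which by the inductive hypothesis applied to each subtree is $x\in\bigcup_n A_{T_{(n,i_n)}}=A_T$. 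If $(\emptyset,1)\in T$, then by the defining condition of $\mathcal{L}T_c$ the unique successor of the root is $((0),0)$; the third condition of Definition~\ref{G} forces $(\emptyset,1)\in S$ iff $((0),0)\in S$, which since every node of $S$ carries a unique label amounts to $((0),1)\notin S$, and by the inductive hypothesis applied to $T_{((0),0)}$ this is $x\notin A_{T_{((0),0)}}$, i.e., $x\in A_T$ by the coding convention at a complement node. Uniqueness at each stage is automatic because the last three conditions of Definition~\ref{G} determine the label at the root once the labels at the children are fixed.

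For the ``moreover'' part it suffices, in view of the iff applied to both $(x,T,S)$ and $(x,\Gamma(T),S')$, to check that $A_{\Gamma(T)}=2^{\mathbb{N}}\setminus A_T$. This is immediate from the definition of $\Gamma$: if $(\emptyset,0)\in T$, then $\Gamma(T)$ prepends a complement node whose unique child carries the subtree $T_{(\emptyset,0)}=T$, giving $A_{\Gamma(T)}=2^{\mathbb{N}}\setminus A_T$; if $(\emptyset,1)\in T$, then $A_T=2^{\mathbb{N}}\setminus A_{T_{((0),0)}}$, while $\Gamma(T)$ is a relabeled copy of $T_{((0),0)}$ and so $A_{\Gamma(T)}=A_{T_{((0),0)}}=2^{\mathbb{N}}\setminus A_T$. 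The main subtlety I anticipate is notational: one must keep straight the two roles played by the labels $0,1$. On $T$ they mark the Borel operation performed at each node, while on $S$ they record membership of $x$ in the intermediate set. This distinction is crucial in the complement case, where one converts ``$((0),0)\in S$'' into ``$((0),1)\notin S$'' via the fact that the labels on $S$ come from a function $T\to 2$.
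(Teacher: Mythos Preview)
Your proposal is correct and follows exactly the approach the paper indicates: the paper's proof is a one-line reference to the discussion after Definition~\ref{G} (which already says the labeling of $S$ is determined from the leaves upward by induction on rank) together with the definition of $\Gamma$, and you have simply written out that induction and the verification that $A_{\Gamma(T)}=2^{\mathbb{N}}\setminus A_T$ in full detail. One minor quibble: in the second case of the ``moreover'' part, $\Gamma(T)$ is not merely a relabeled copy of $T_{((0),0)}$ but literally equal to it (the root already carries label $0$), though your conclusion is unaffected.
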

\begin{proof}
This follows from the discussion after the Definition \ref{G} and the definition of $\Gamma$.
\end{proof}

\begin{proof}[Proof of Lemma\ref{CDcoanalytic}]
Let $G_i:=\{(x,T,S)\in G:(\emptyset,i)\in S\}$ for $i\in 2$. One can easily see that $G=G_0\cup G_1$ and both sets are Borel.  Let $proj(G_i):=\{(x,T):\exists S\in \mathcal{L}T \ (x,T,S)\in G_i\}$. Then from Claim \ref{unique} we have
\[
\mathcal{C}_1= (2^\mathbb{N}\times WF\mathcal{L}T_c)\cap proj(G_1)
\]
and 
\[
\mathcal{C}_0= (2^\mathbb{N}\times WF\mathcal{L}T_c)\cap proj(G_0).
\]
Finally,  we show that the set $(2^\mathbb{N}\times WF\mathcal{L}T_c)\cap proj(G_i)$ is $\mathbf\Pi^1_1$ for $i<2$.
This follows from the classical result that  if $A\subseteq X\times Y$ is Borel, then $\{x\in X:\exists!y\in Y (x,y)\in A\}$ is $\mathbf\Pi^1_1$.  But we can also give a direct proof as follows. 

The sets $H_i:=(2^\mathbb{N}\times \mathcal{L}T_c)\setminus proj(G_i)$ are clearly $\mathbf \Pi^1_1$ and so are $M_i:=WF\mathcal{L}T_c\cap H_i$ for $i<2$. But then using the Claim \ref{unique} we see that $(2^\mathbb{N}\times WF\mathcal{L}T_c)\cap proj(G_i)=M_{1-i}$.
\end{proof}

Next we define a coding of Borel functions from $2^\mathbb{N}$ to $2^{\mathbb{N}}$. Let 
$$
C_n:=\{x\in 2^\mathbb{N}:x(n)=1\}.
$$
Let $f:2^\mathbb{N}\to2^\mathbb{N}$ be a Borel function and let $A_n:=f^{-1}(C_n)$. Then $f$ is described by the sequence $\{A_n\}_{n\in\omega}$ because $f(x)(n)=1$ if and only if $x\in A_n$. Thus the following is the natural definition of codes for Borel functions.

\begin{definition}
Let $\mathcal{F}T=(\mathcal{L}T_c)^{\omega}$ and $WF\mathcal{F}T=(WF\mathcal{L}T_c)^{\omega}$.
\end{definition}

The product topology on  $\mathcal{F}T$ is Polish and  $WF\mathcal{F}T\subseteq \mathcal{F}T$ is $\mathbf \Pi^1_1$.
We denote the elements of $\mathcal{F}T$ also by $T$ and the $n$-th element of $T$ as $T(n)$.

\begin{lemma}
The set $WF\mathcal{F}T$ codes Borel functions from $2^\mathbb{N}$ to $2^\mathbb{N}$ i.e. every sequence $T\in WF\mathcal{F}T$ is a code for a function $f_T$ and for every Borel function $f$ there is a sequence $T\in WF\mathcal{F}T$ such that $f_T=f$.
\end{lemma}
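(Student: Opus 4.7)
The plan is to exploit directly the paragraph preceding the lemma: a Borel function $f:2^{\mathbb{N}}\to 2^{\mathbb{N}}$ is completely determined by the sequence of Borel sets $A_n=f^{-1}(C_n)$, since for every $x$ we have $f(x)(n)=1$ iff $x\in A_n$. Since Lemma \ref{coding} already tells us how to code an arbitrary Borel subset of $2^{\mathbb{N}}$ by a single element of $WF\mathcal{L}T_c$, coding a Borel function just amounts to taking a sequence of such codes, which is exactly the definition of $WF\mathcal{F}T=(WF\mathcal{L}T_c)^{\omega}$.

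More precisely, first I would define, for each $T\in WF\mathcal{F}T$, a function $f_T:2^{\mathbb{N}}\to 2^{\mathbb{N}}$ by setting
\[
f_T(x)(n)=1 \iff x\in A_{T(n)},
\]
where $A_{T(n)}$ is the Borel set coded by $T(n)\in WF\mathcal{L}T_c$ via Lemma \ref{coding}. Because $T(n)$ is well-founded for every $n$, each $A_{T(n)}$ is a well-defined Borel set, so $f_T(x)\in 2^{\mathbb{N}}$ is unambiguously defined for every $x\in 2^{\mathbb{N}}$. The fact that $f_T$ is Borel is immediate: the family $\{C_n:n\in\mathbb{N}\}$ is a subbasis for the topology of $2^{\mathbb{N}}$ and $f_T^{-1}(C_n)=A_{T(n)}$ is Borel for every $n$, so all preimages of basic open sets are Borel.

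For the converse, given a Borel function $f:2^{\mathbb{N}}\to 2^{\mathbb{N}}$, the sets $A_n:=f^{-1}(C_n)$ are Borel subsets of $2^{\mathbb{N}}$. By Lemma \ref{coding}, for each $n\in\mathbb{N}$ we can choose $T(n)\in WF\mathcal{L}T_c$ with $A_{T(n)}=A_n$. Then $T=(T(n))_{n\in\mathbb{N}}$ lies in $WF\mathcal{F}T$, and by construction $f_T(x)(n)=1$ iff $x\in A_n$ iff $f(x)(n)=1$, so $f_T=f$.

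There is really no hard step here: the lemma is essentially a reformulation of Lemma \ref{coding} in the product setting, and the only thing to double-check is that the sequence $(T(n))_n$ obtained in the converse direction really does belong to the product space $WF\mathcal{F}T$, which is immediate from the definition $WF\mathcal{F}T=(WF\mathcal{L}T_c)^{\omega}$. The real content has already been done in establishing Lemma \ref{coding}; the present lemma only packages it coordinatewise.
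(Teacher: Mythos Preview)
Your proof is correct and follows exactly the approach of the paper: reduce to Lemma~\ref{coding} coordinatewise via the identification of a Borel $f$ with the sequence $A_n=f^{-1}(C_n)$. In fact you give more detail than the paper, which only sketches the surjectivity direction and leaves the definition of $f_T$ and its Borelness implicit.
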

\begin{proof}
As it was mentioned above,  every Borel function $f$ is coded by a sequence of Borel sets $\{A_n\}_{n\in \omega}$. Let $T=(T(n))_n$ be such that  $T(n)\in WF\mathcal{L}T_c$ codes $A_n$ for each $n\in \mathbb{N}$.
\end{proof}

\subsubsection{Coding of selectors and $F_\sigma$ ideals}

Now we  will show that the codes for $F_\sigma$ ideals with Borel selector is $\mathbf \Sigma^1_2$ and then conclude with the main results of this section. 

Consider the following map $\Omega:2^\mathbb{N}\times WF\mathcal{F}T\to 2^\mathbb{N}$ by $\Omega(x,T)(n)=1$ if and only if $x$ is in the set coded by $T(n)$.  From the definitions of $\mathcal{C}_i$,  $\Omega$ and  Lemma \ref{CDcoanalytic} the following  is straightforward. 

\begin{lemma}\label{OmegaBorel}
Let $\mathcal{R}\subseteq \cantor\times \mathcal{F}T\times \cantor$ be given by $(x,T,y)\in \mathcal{R}$ if and only if 
\[
\forall n\in \mathbb{N}\,[ \,(\,(x,T(n))\in \mathcal{C}_1\rightarrow y(n)=1)\,\wedge\, (\,(x,T(n))\in \mathcal{C}_0\rightarrow y(n)=0)\,].
\]
Then $\mathcal{R}$ is $\mathbf\Sigma_1^1$ and for all $(x,T,y)\in \cantor\times WF\mathcal{F}T\times \cantor$ we have
\[
\Omega(x,T)=y \Longleftrightarrow \, (x,T,y)\in \mathcal{R}.
\]
\qed
\end{lemma}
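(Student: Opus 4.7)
The plan is to verify the two assertions separately; both follow by inspection from Lemma \ref{CDcoanalytic} and Claim \ref{unique}, so I expect no serious obstacle. The main task is to organize the logical form of the defining conjunction so that the complexity bound is transparent.

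For the complexity bound, I would rewrite each implication in disjunctive form: the clause $(x,T(n))\in \mathcal{C}_1\rightarrow y(n)=1$ becomes $(x,T(n))\notin \mathcal{C}_1 \vee y(n)=1$, and similarly for the other clause. By Lemma \ref{CDcoanalytic} the complement of $\mathcal{C}_1$ in $\cantor\times \mathcal{L}T_c$ is $\mathbf\Sigma_1^1$, the set $\{y: y(n)=1\}$ is clopen, and the evaluation map $(T,n)\mapsto T(n)$ from $\mathcal{F}T\times\mathbb{N}$ to $\mathcal{L}T_c$ is continuous, so for each fixed $n$ the disjunction defines a $\mathbf\Sigma_1^1$ subset of $\cantor\times \mathcal{F}T\times \cantor$. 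The analogous rewriting handles the second implication. Taking the conjunction of these two and then the countable intersection over $n\in\mathbb{N}$ preserves $\mathbf\Sigma_1^1$, so $\mathcal{R}$ is $\mathbf\Sigma_1^1$.

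For the equivalence on well-founded codes, I would invoke Claim \ref{unique}: when $T(n)\in WF\mathcal{L}T_c$, the pair $(x,T(n))$ lies in exactly one of $\mathcal{C}_0$ or $\mathcal{C}_1$, namely in $\mathcal{C}_1$ when $x\in A_{T(n)}$ and in $\mathcal{C}_0$ otherwise. Combined with the definition $\Omega(x,T)(n)=1 \iff x\in A_{T(n)}$, this gives that for each $n$ exactly one of the two hypotheses in the defining conjunction of $\mathcal{R}$ is true. The forward direction is then immediate: if $\Omega(x,T)=y$, the applicable implication has a true conclusion. The backward direction observes that the applicable implication forces $y(n)$ to equal $\Omega(x,T)(n)$, so $y=\Omega(x,T)$.

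The only subtlety worth checking is the continuity of the evaluation map on $\mathcal{F}T=(\mathcal{L}T_c)^{\omega}$ with the product topology, which is immediate; otherwise the argument is entirely mechanical bookkeeping combining Lemma \ref{CDcoanalytic} and Claim \ref{unique}.
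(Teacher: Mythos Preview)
Your proposal is correct and is precisely the ``straightforward'' verification the paper leaves to the reader: rewrite the implications as disjunctions, use that each $\mathcal{C}_i$ is $\mathbf\Pi^1_1$ (Lemma~\ref{CDcoanalytic}) so its complement is $\mathbf\Sigma^1_1$, and close under countable intersection. One minor remark: the appeal to Claim~\ref{unique} for the equivalence part is unnecessary, since the fact that $\mathcal{C}_0$ and $\mathcal{C}_1$ partition $\cantor\times WF\mathcal{L}T_c$ is immediate from their definitions; otherwise your argument matches the intended one.
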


Consider the following set $\mathcal{M}\subseteq 2^\mathbb{N}\times \mathcal{F}T\times K(2^\mathbb{N})$ defined by $(x,T,K)\in \mathcal{M}$ if and only if 
\begin{itemize}
\item $T\in WF\mathcal{F}T$,
\item $\Omega(x,T)\in \downarrow K$,
\item $\Omega(x,T)\subseteq x$,
\item if $|x|=\omega$, then $|\Omega(x,T)|=|x|$.
\end{itemize}

\begin{lemma}
$\mathcal{M}$ is a $\mathbf \Pi^1_1$  subset of $2^\mathbb{N}\times \mathcal{F}T\times K(2^\mathbb{N})$.
\end{lemma}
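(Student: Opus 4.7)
The plan is to split the defining conjunction of $\mathcal{M}$ into its four clauses and verify that each can be written in $\mathbf\Pi^1_1$ form; their conjunction will then be $\mathbf\Pi^1_1$ as well.

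The first clause, $T\in WF\mathcal{F}T$, is $\mathbf\Pi^1_1$ as noted in the paragraph preceding the statement. For the remaining three clauses the key device is to avoid an existential quantifier over $\Omega(x,T)$ and instead to exploit the uniqueness of $\Omega(x,T)$ on $WF\mathcal{F}T$ (Claim \ref{unique} applied coordinatewise), combined with the $\mathbf\Sigma^1_1$ characterisation of ``$\Omega(x,T)=y$'' supplied by Lemma \ref{OmegaBorel}. Specifically, any statement of the form ``$\Omega(x,T)\in A$'' is equivalent, in the presence of $T\in WF\mathcal{F}T$, to
\[
\forall y\,\bigl[(x,T,y)\in \mathcal{R}\;\rightarrow\; y\in A\bigr],
\]
and this universal formula is $\mathbf\Pi^1_1$ whenever the relation ``$y\in A$'' is itself $\mathbf\Pi^1_1$ or Borel, because the negation of the $\mathbf\Sigma^1_1$ premise is $\mathbf\Pi^1_1$ and $\forall y$ preserves $\mathbf\Pi^1_1$.

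I will instantiate this with $A=\downarrow K$ for clause (b), with $A=\{y:y\subseteq x\}$ for clause (c), and with $A=\{y:|y|=\omega\}$ (taken in disjunction with the Borel condition ``$|x|<\omega$'') for clause (d). The latter two sets are Borel, so the only point requiring attention is (b): one must check that $\{(y,K): y\in \downarrow K\}$ is closed in $2^{\mathbb{N}}\times K(2^{\mathbb{N}})$. This follows from the continuity of $K\mapsto \downarrow K$ (noted after the definition of pseudo-tall) together with the standard fact that the membership relation is closed in the Vietoris hyperspace. Taking the conjunction of the four resulting $\mathbf\Pi^1_1$ conditions concludes the proof; the closedness check just mentioned is the only step that is not mere quantifier manipulation and is what I expect to be the main (though still routine) obstacle.
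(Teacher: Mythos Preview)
Your proposal is correct and follows essentially the same approach as the paper: rewrite each clause involving $\Omega(x,T)$ via the universal formula $\forall y\,[(x,T,y)\in\mathcal{R}\rightarrow y\in A]$ using Lemma~\ref{OmegaBorel}, and observe that the resulting conditions are $\mathbf\Pi^1_1$. The paper's proof only spells out the second clause as an illustrative instance, while you have carefully handled all four and added the closedness check for $\{(y,K):y\in{\downarrow}K\}$; this is a faithful elaboration rather than a different argument.
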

\begin{proof} 
It follows from Lemma \ref{OmegaBorel}. For instance, the second condition can be expressed as follows: 
\[
T\in WF\mathcal{F}T\,\wedge \,\Omega(x,T)\in \downarrow K \Longleftrightarrow T\in WF\mathcal{F}T\wedge \forall y\in \cantor ((x,T,y)\in \mathcal{R}\rightarrow y\in \downarrow K). 
\]
\end{proof}

\begin{theorem}
The set of all $K\in K(2^\mathbb{N})$ that have a Borel pseudo-selector is $\mathbf\Sigma^1_2$.
\end{theorem}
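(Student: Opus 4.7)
The plan is to package everything that has just been set up: the space $WF\mathcal{F}T$ parametrizes all Borel functions $2^{\mathbb{N}}\to 2^{\mathbb{N}}$ via the evaluation map $\Omega$, and $\mathcal{M}$ encodes, on each slice, exactly the requirement that the Borel function coded by $T$ acts as a pseudo-selector on the input $x$ with target $K$. So the three conditions in the definition of a Borel pseudo-selector for $K$ are captured uniformly by the single statement ``$(x,T,K)\in \mathcal{M}$ for all $x$.''

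Concretely, I would argue as follows. First observe that $K\in K(2^{\mathbb{N}})$ has a Borel pseudo-selector if and only if there is a Borel function $S:2^{\mathbb{N}}\to 2^{\mathbb{N}}$ such that $S(x)\in\,\downarrow\!K$, $S(x)\subseteq x$, and $|S(x)|=|x|$ whenever $x$ is infinite; by the coding lemma for Borel functions, such $S$ exist if and only if there exists $T\in WF\mathcal{F}T$ with $S=\Omega(\cdot,T)$. Translating each bullet in the definition of $\mathcal{M}$, this is in turn equivalent to
\[
\exists T\in \mathcal{F}T\,\forall x\in 2^{\mathbb{N}}\;(x,T,K)\in \mathcal{M}.
\]
(The clause $T\in WF\mathcal{F}T$ is absorbed into $\mathcal{M}$, so the outer quantifier may range over the Polish space $\mathcal{F}T$.)

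Now I would read off the complexity. The set $\mathcal{M}$ is $\mathbf{\Pi}^1_1$ by the previous lemma, so the inner condition $\forall x\,(x,T,K)\in \mathcal{M}$, being a universal quantification over a Polish space of a $\mathbf{\Pi}^1_1$ set, is again $\mathbf{\Pi}^1_1$ in $(T,K)$. Projecting out $T$ by the existential quantifier over $\mathcal{F}T$ yields a $\mathbf{\Sigma}^1_2$ set in $K$, which is exactly the set of codes of closed sets admitting a Borel pseudo-selector.

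There is no real obstacle beyond what has been built up: the genuine work was in the preceding section, where the coding $T\mapsto \Omega(\cdot,T)$ was shown to witness Borelness via the auxiliary $\mathbf{\Sigma}^1_1$ relation $\mathcal{R}$ and the $\mathbf{\Pi}^1_1$-ness of $\mathcal{M}$ was verified. The only point to be careful about is that the quantifier ``$\exists T\in \mathcal{F}T$'' must indeed be over a standard Borel (in fact Polish) space, so that its projection lands in $\mathbf{\Sigma}^1_2$; this is immediate because $\mathcal{F}T=(\mathcal{L}T_c)^{\omega}$ is Polish.
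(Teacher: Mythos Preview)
Your proof is correct and follows essentially the same approach as the paper: both express the set in question as $\{K\in K(2^{\mathbb{N}}):\exists T\in \mathcal{F}T\ \forall x\in 2^{\mathbb{N}}\ (x,T,K)\in \mathcal{M}\}$ and read off the $\mathbf{\Sigma}^1_2$ complexity from the $\mathbf{\Pi}^1_1$-ness of $\mathcal{M}$. Your version simply spells out in more detail why this description is equivalent to admitting a Borel pseudo-selector and why the quantifier count yields $\mathbf{\Sigma}^1_2$.
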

\begin{proof}
This set may be described as 
$$\{K\in K(2^\omega):\exists T\in \mathcal{F}T\,\forall x\in 2^\omega (x,T,K)\in \mathcal{M}\}$$
which is $\mathbf\Sigma^1_2$.
\end{proof}

\begin{theorem}
\label{fsig-no-selector}
There is a $F_{\sigma}$ tall ideal without a Borel selector.
\end{theorem}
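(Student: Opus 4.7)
The plan is to deduce this by a complexity comparison between two natural subsets of the code space $K(2^{\mathbb{N}})$. Recall that every $F_\sigma$ ideal is of the form $\mathcal{I}_K$ for some $K \in K(2^{\mathbb{N}})$ (Proposition \ref{prop1}), so $K(2^{\mathbb{N}})$ serves as a space of codes for $F_\sigma$ ideals.

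Set $\mathcal{T} = \{K \in K(2^{\mathbb{N}}) : \mathcal{I}_K \text{ is tall}\}$ and $\mathcal{S} = \{K \in K(2^{\mathbb{N}}) : \mathcal{I}_K \text{ has a Borel selector}\}$. From \cite{GrebikHrusak2017} the set $\mathcal{T}$ is $\mathbf{\Pi}^1_2$-complete, while the preceding theorem of this section establishes that $\mathcal{S}$ is $\mathbf{\Sigma}^1_2$ (via the equivalence with pseudo-selectors and the coding $\mathcal{F}T$ of Borel functions). Moreover $\mathcal{S} \subseteq \mathcal{T}$, because the existence of a Borel selector for $\mathcal{I}_K$ immediately witnesses that $\mathcal{I}_K$ is tall.

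Now I would argue by contradiction: suppose every tall $F_\sigma$ ideal admits a Borel selector. Then $\mathcal{T} = \mathcal{S}$, so the $\mathbf{\Pi}^1_2$-complete set $\mathcal{T}$ would be $\mathbf{\Sigma}^1_2$. By the usual reduction argument, every $\mathbf{\Pi}^1_2$ subset of a Polish space would then be $\mathbf{\Sigma}^1_2$, which contradicts the classical fact $\mathbf{\Pi}^1_2 \neq \mathbf{\Sigma}^1_2$ (a consequence of the hierarchy theorem for projective classes, provable in ZFC). Hence $\mathcal{S} \subsetneq \mathcal{T}$, and any $K \in \mathcal{T} \setminus \mathcal{S}$ yields a tall $F_\sigma$ ideal $\mathcal{I}_K$ with no Borel selector.

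There is really no main obstacle left at this stage: both complexity computations have already been done (the $\mathbf{\Pi}^1_2$-completeness of $\mathcal{T}$ is imported from \cite{GrebikHrusak2017}, and the $\mathbf{\Sigma}^1_2$ bound on $\mathcal{S}$ was the technical content of the coding machinery for Borel sets and functions developed just above). The only conceptual point worth emphasizing in the write-up is that the argument is purely non-constructive: it produces a tall $F_\sigma$ ideal with no selector without exhibiting one, which is consistent with the authors' remark in the introduction that finding a concrete example remains open.
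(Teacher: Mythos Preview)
Your proposal is correct and follows exactly the paper's own argument: compare the $\mathbf\Pi^1_2$-complete set of codes of tall $F_\sigma$ ideals with the $\mathbf\Sigma^1_2$ set of codes admitting a Borel (pseudo-)selector, note the latter is contained in the former, and conclude the inclusion is proper since $\mathbf\Pi^1_2\neq\mathbf\Sigma^1_2$. The paper's proof is just a one-sentence version of what you wrote.
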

\begin{proof}
The codes of $F_{\sigma}$ ideals with a Borel selector are clearly a subset of all tall $F_\sigma$ ideals and the former set is $\mathbf\Sigma^1_2$ but the later is $\mathbf\Pi^1_2-$complete.
\end{proof}

\begin{corollary}\cite{Kechris94}
There is a closed subset of $A\subseteq \baire\times \baire$ such that $\baire = proj(A)=\{x\in \baire:\exists y\in \baire \ \operatorname{s.t.} \ (x,y)\in A\}$ and it does not have a Borel uniformization.
\end{corollary}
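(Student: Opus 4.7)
The idea is to convert the non-existence of a Borel selector from Theorem \ref{fsig-no-selector} into the non-existence of a Borel uniformization of a suitable closed set. Let $\mathcal{I}$ be a tall $F_\sigma$ ideal without a Borel selector and fix $K\in K(2^\mathbb{N})$ with $\mathcal{I}=\mathcal{I}_K$ (Proposition \ref{prop1}); by the equivalences established earlier in this section, $K$ is pseudo-tall and admits no Borel pseudo-selector. I will work inside the space $X=[\mathbb{N}]^\omega\subseteq 2^\mathbb{N}$ of infinite subsets of $\mathbb{N}$, which is a $G_\delta$, hence Polish, subspace that is zero-dimensional and whose compact subsets have empty interior, and which is therefore homeomorphic to $\baire$ (e.g. via the gap-enumeration map $x\mapsto (e_x(0),e_x(1)-e_x(0)-1,e_x(2)-e_x(1)-1,\dots)$).

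Consider the set
\[
B=\{(x,y)\in X\times X:\; y\subseteq x \text{ and } y\in\,\downarrow\! K\}.
\]
The relation $y\subseteq x$ is closed in $2^\mathbb{N}\times 2^\mathbb{N}$, and $\downarrow\! K$ is closed in $2^\mathbb{N}$ since it is the projection of the closed set $\{(x,y):y\in K,\ x\subseteq y\}$ with $2^\mathbb{N}$ compact; hence $B$ is closed in $X\times X$. Pseudo-tallness of $K$ says that for every $x\in X$ there is an infinite $y\subseteq x$ with $y\in\,\downarrow\! K$, so the first-coordinate projection of $B$ is all of $X$.

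Suppose, for a contradiction, that $B$ admits a Borel uniformization $u:X\to X$. Define $S:2^\mathbb{N}\to 2^\mathbb{N}$ by $S(x)=u(x)$ for $x\in X$ and $S(x)=\emptyset$ otherwise; since $X$ is Borel, $S$ is Borel. Then $S(x)\subseteq x$ holds in both cases, $|S(x)|=\omega$ for $x\in X$, and $S(x)\in\,\downarrow\! K$ because $\emptyset\in\,\downarrow\! K$ ($K$ is non-empty by tallness of $\mathcal{I}$). Hence $S$ is a Borel pseudo-selector for $K$, contradicting the choice of $K$. Transferring $B$ along any homeomorphism $X\cong\baire$ produces the desired closed $A\subseteq\baire\times\baire$ with full projection and no Borel uniformization. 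The only non-routine step is arranging the problem so that the underlying set is genuinely closed (which is why we restrict to $X$ and use the closed set $\downarrow\! K$ instead of the $F_\sigma$ ideal itself); the identification of $X$ with $\baire$ is classical.
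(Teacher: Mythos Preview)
Your proof is correct and follows essentially the same route as the paper's: both restrict to the Polish space $X=[\mathbb{N}]^\omega\cong\baire$, form the closed set of pairs $(x,y)$ with $y\subseteq x$ and $y$ lying in a fixed tall closed family, and derive a contradiction with Theorem~\ref{fsig-no-selector} from a hypothetical Borel uniformization. The only cosmetic difference is that the paper works directly with a tall $K\in K(2^{\mathbb{N}})$ (implicitly replacing the code by its downward closure), whereas you keep $K$ merely pseudo-tall and write $\downarrow\! K$ explicitly; the arguments are otherwise identical.
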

\begin{proof}
The space $X:=2^\mathbb{N}\setminus \{x:\exists n \ \operatorname{s.t.} \ \forall m>n \ x(m)=0\}$ is homeomorphic to $\baire$. The restriction of the relation $S=\{(x,y)\in 2^\mathbb{N}\times 2^\mathbb{N}:x\supseteq y\}$ to $X$ is closed in $X$. By our theorem there is a tall $K\in K(2^\mathbb{N})$ without Borel selector. Then $K\cap X$ is closed in $X$ and the closed set $A:=S\upharpoonright (X\times X)\cap (X\times (K\cap X))$ has no Borel uniformization.
\end{proof}

\subsubsection{Galvin's theorem}
\label{uniformRamsey}

Now we use the  previous result to simply observe that there is no uniform version of Galvin's theorem.

\begin{theorem}
There is $\mathcal{F}\subseteq [\mathbb{N}]^{<\omega}$ such that there is no Borel function $S:2^{\mathbb{N}}\to 2^{\mathbb{N}}$ satisfies $S(x)\in hom(\mathcal{F})$, $S(x)\subseteq x$ and $|S(x)|=\omega$ for every infinite $x\in 2^{\mathbb{N}}$. 
\end{theorem}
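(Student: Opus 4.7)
The plan is to deduce this theorem from Theorem~\ref{fsig-no-selector} via Proposition~\ref{rep-of-closed}, bridging the two with the equivalence between Borel selectors for a closed hereditary tall family and Borel selectors for the $F_\sigma$ ideal it generates.

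First, I would invoke Theorem~\ref{fsig-no-selector} to fix a tall $F_\sigma$ ideal $\I$ with no Borel selector, and write $\I = \I_K$ for some $K \in K(2^{\mathbb{N}})$. Replacing $K$ by $\downarrow K$, I may assume that $K$ is closed (because $\downarrow$ is continuous on $K(2^{\mathbb{N}})$), hereditary, and still generates the same ideal $\I$ (every element of $\downarrow K$ is contained in an element of $K$, and vice versa). Moreover, since $\I_K$ is tall, $K$ is pseudo-tall, and pseudo-tallness coincides with tallness as soon as $K$ is hereditary. Hence $K$ is a closed, hereditary, tall subset of $2^{\mathbb{N}}$.

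Second, Proposition~\ref{rep-of-closed} applied to this $K$ provides $\mathcal{F} \subseteq [\mathbb{N}]^{<\omega}$ with $hom(\mathcal{F}) = K$, and this $\mathcal{F}$ will be the desired witness. Suppose, toward a contradiction, that some Borel map $S : 2^{\mathbb{N}} \to 2^{\mathbb{N}}$ satisfies $S(x) \in hom(\mathcal{F})$, $S(x) \subseteq x$, and $|S(x)| = \omega$ whenever $|x| = \omega$. Since $hom(\mathcal{F}) = K$, this $S$ is exactly a Borel selector for the tall family $K$ in the sense of the basic definition. By the equivalence proposition listing the four reformulations of having a Borel selector (for tall $K \in K(2^{\mathbb{N}})$, for $\I_K$, etc.), this would yield a Borel selector for $\I_K = \I$, contradicting the choice of $\I$.

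The heavy lifting has already been done in Theorem~\ref{fsig-no-selector}; the present result is essentially a packaging step, and there is no serious obstacle. The only things to check carefully are that the operation $\downarrow$ preserves the three properties demanded by Proposition~\ref{rep-of-closed} (closed, hereditary, tall) and that the chain of equivalences genuinely transports an assumed Borel selector for $hom(\mathcal{F})$ back to a Borel selector for $\I$, both of which follow immediately from the machinery already established in the paper.
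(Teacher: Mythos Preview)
Your proposal is correct and follows essentially the same route as the paper: the paper's proof is the one-liner ``Combine Theorem~\ref{fsig-no-selector} and Proposition~\ref{rep-of-closed},'' and you have simply unpacked the implicit bridging step through $\downarrow K$ and the equivalence proposition. The only tiny omission is that the hypothetical $S$ in the statement is defined only on infinite $x$, whereas the paper's definition of Borel selector requires a value on every $x$; but this is harmless, since extending $S$ by $S(x)=\emptyset$ on finite $x$ (noting $\emptyset\in K$ by heredity) yields a selector in the required sense.
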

\begin{proof}
Combine Theorem\ref{fsig-no-selector} and Proposition\ref{rep-of-closed}.
\end{proof}

\subsection{A $\mathbf \Pi^{1}_2$ tall ideal without a closed tall subset}

We construct a $\mathbf \Pi_2^1$ tall ideal which does not contain $hom(\mathcal{F})$ for every $\mathcal{F}\subseteq [\mathbb{N}]^{<\omega}$.
Recall that $hom(\mathcal{F})$ is $\mathbf\Pi^1_1$ for every $\mathcal{F}\subseteq [\mathbb{N}]^{<\omega}$ and therefore we have the following.

\begin{observation}
\label{uniformR} Let  $R\subseteq 2^{[\mathbb{N}]^{<\omega}}\times [\mathbb{N}]^{\omega}\times \infiN$ be defined by 
\[	
R(\F, x, y)\Leftrightarrow y\subseteq x \; \& \; y\in hom(\F).
\]  
Then $R$ is  $\mathbf\Pi^1_1$.
\end{observation}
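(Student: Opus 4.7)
The plan is to show that $R$ is expressible as the conjunction of a closed relation with a relation of the form ``$y\in hom(\F)$'' that is itself $\mathbf\Pi^1_1$ uniformly in $(\F,y)$. The first conjunct $y\subseteq x$ is closed in $(x,y)\in 2^{\mathbb{N}}\times 2^{\mathbb{N}}$, so the whole problem reduces to bounding the complexity of the relation $Q(\F,y)\Leftrightarrow y\in hom(\F)$.

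For this I would unpack the definition of $hom(\F)$ given right after Theorem \ref{Galvin}: $y\in hom(\F)$ iff either (a) for every $z\in[y]^{\omega}$ there is $s\in \F$ with $s\sqsubseteq z$, or (b) $[y]^{<\omega}\cap \F=\emptyset$. Condition (b) is a countable conjunction over $s\in [\mathbb{N}]^{<\omega}$ of the clopen conditions ``$s\not\subseteq y$ or $s\notin \F$'', hence closed in $(\F,y)$. Condition (a) can be written as
\[
\forall z\in 2^{\mathbb{N}}\,\bigl(\,(z\subseteq y\,\wedge\, z\in\infiN)\;\longrightarrow\;\exists s\in [\mathbb{N}]^{<\omega}\,(s\in\F\,\wedge\, s\sqsubseteq z)\,\bigr).
\]
The premise of the implication is Borel in $(y,z)$, and the consequent is a countable union over $s\in[\mathbb{N}]^{<\omega}$ of clopen conditions on $(\F,z)$ (membership in $\F$ is a coordinate query, and $s\sqsubseteq z$ is clopen). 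Hence the matrix is Borel in $(\F,y,z)$, and the universal quantifier over $z\in 2^{\mathbb{N}}$ makes (a) a $\mathbf\Pi^1_1$ condition on $(\F,y)$.

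Therefore $Q$ is a union of a closed set and a $\mathbf\Pi^1_1$ set, hence $\mathbf\Pi^1_1$, and $R(\F,x,y)$ is the intersection of $Q(\F,y)$ with the closed set $\{(x,y):y\subseteq x\}$, which remains $\mathbf\Pi^1_1$. I don't expect any real obstacle here; the only thing to be careful about is to verify that the quantifier over $s\in \F$ can indeed be absorbed as a countable disjunction of clopen conditions on $\F\in 2^{[\mathbb{N}]^{<\omega}}$, which is immediate from the product topology on $2^{[\mathbb{N}]^{<\omega}}$.
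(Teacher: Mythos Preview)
Your argument is correct and is exactly the unpacking the paper has in mind: the text simply records, just before the observation, that the second alternative in Galvin's theorem defines a closed set and the first a $\mathbf\Pi^1_1$ set, and leaves the uniform-in-$\F$ verification implicit. Your write-up supplies precisely those details (including the point that the quantifier over $s$ is a countable Boolean combination of clopen coordinate conditions on $2^{[\mathbb{N}]^{<\omega}}$), so the approach matches the paper's.
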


\begin{lemma} \cite[Lemma 4.6]{HMTU2017}
\label{psi} There is a continuous function $\psi:\infiN\times\cantor\rightarrow \cantor$ such that for every infinite $x\in \infiN$, the collection $\{\psi(x,y):\; y\in\cantor\}$ is an almost disjoint family of infinite subsets of $x$. Moreover, for all infinite $x$ there is an infinite $z\subseteq x$ such that $z\cap \psi(x,y)=\emptyset$ for all $y\in \cantor$.
\end{lemma}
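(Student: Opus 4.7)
The plan is to build $\psi$ by transplanting a fixed almost disjoint family on $\mathbb{N}$ into each infinite $x$ via its increasing enumeration, while reserving ``half'' of $x$ as the witness $z$. Concretely, I fix a bijection $\sigma:2^{<\omega}\to 2\mathbb{N}$ between the finite binary sequences and the even naturals, and define, for each $y\in\cantor$,
\[
A_y=\{\sigma(y\upharpoonright n):n<\omega\}\subseteq 2\mathbb{N}.
\]
Each $A_y$ is infinite, and if $y\ne y'$ and $k$ is the first index where they disagree, then $y\upharpoonright n\ne y'\upharpoonright n$ for every $n>k$, so $A_y\cap A_{y'}\subseteq\{\sigma(y\upharpoonright n):n\le k\}$ is finite. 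Hence $\{A_y:y\in\cantor\}$ is an almost disjoint family contained in $2\mathbb{N}$, and the map $y\mapsto A_y$ is continuous.

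Next, for each infinite $x\in\infiN$, let $e_x:\mathbb{N}\to x$ be the increasing enumeration and set $\psi(x,y)=e_x[A_y]$. Then $\psi(x,y)$ is an infinite subset of $x$, and since $e_x$ is injective, for $y\ne y'$ the intersection $\psi(x,y)\cap\psi(x,y')$ equals $e_x[A_y\cap A_{y'}]$, hence is finite. This delivers the almost disjoint family of infinite subsets of $x$ required in the statement. For the last clause put $z=e_x[2\mathbb{N}+1]$, the image of the odd naturals under $e_x$: this is an infinite subset of $x$, and since every $A_y$ lies in $2\mathbb{N}$ we have $z\cap\psi(x,y)=\emptyset$ for all $y\in\cantor$.

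What is left is continuity of $\psi$. For a fixed $k\in\mathbb{N}$ one has $k\in\psi(x,y)$ if and only if $k\in x$ and the position $|x\cap k|$ of $k$ in the enumeration of $x$ belongs to $A_y$; the latter says that the finite binary sequence $\sigma^{-1}(|x\cap k|)$ is an initial segment of $y$. The conditions $k\in x$ and the value $|x\cap k|$ depend only on $x\cap(k+1)$, and once $|x\cap k|$ is fixed, its membership in $A_y$ depends only on an initial segment of $y$ of length $1+|\sigma^{-1}(|x\cap k|)|$. Hence each coordinate of $\psi(x,y)$ is determined on a basic clopen neighborhood of $(x,y)$, so $\psi$ is continuous. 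I do not anticipate a real obstacle here; the only design point is to place the range of $\sigma$ inside a coinfinite subset of $\mathbb{N}$ so that room remains for the witness $z$, which the choice $\sigma:2^{<\omega}\to 2\mathbb{N}$ handles automatically.
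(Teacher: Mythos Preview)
Your argument is correct. The paper does not actually supply a proof of this lemma; it is quoted verbatim from \cite{HMTU2017} and used as a black box, so there is no in-paper proof to compare against. Your construction---pushing the standard almost disjoint family $\{A_y:y\in\cantor\}$ built from branches of $2^{<\omega}$ through the increasing enumeration $e_x$, with the image of $\sigma$ confined to the evens so that the odd positions of $x$ furnish the disjoint witness $z$---is exactly the natural approach and matches what one finds in the cited source. One cosmetic remark: in the continuity paragraph you should note that when $|x\cap k|$ is odd the condition $|x\cap k|\in A_y$ fails outright (since $\sigma$ maps into $2\mathbb{N}$), so the case split is also decided by $x\upharpoonright(k+1)$ alone; this is implicit in what you wrote but worth making explicit.
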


\begin{theorem}
\label{noclosedtall}
There is a $\mathbf \Pi^{1}_2$ tall ideal $\I$ such that for all $x\in\I^+$ and all $\F\subseteq\finN$ there is $y\subseteq x$ with $y\in hom(\F)\cap \I^+$. In particular, $\I$ does not contain any  closed hereditary tall  set. 
\end{theorem}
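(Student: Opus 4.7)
The strategy is to combine a diagonal construction with the continuous AD-family parameterization from Lemma \ref{psi} to produce a $\mathbf{\Pi}^1_2$ ideal $\I$ whose positive part dodges every closed hereditary tall family. Since by Proposition \ref{rep-of-closed} every closed hereditary tall set is of the form $hom(\F)$, it suffices to arrange that every $\I^+$-set has an $\F$-homogeneous subset in $\I^+$, for every $\F\subseteq\finN$.

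My first attempt would be to define
\[
\I = \Bigl\{a \in 2^\mathbb{N} : \forall x \in [\mathbb{N}]^\omega \ \exists F \in [2^\mathbb{N}]^{<\omega} \ \ a\cap x \subseteq^* \bigcup_{y\in F}\psi(x,y)\Bigr\},
\]
which is visibly $\mathbf{\Pi}^1_2$ (universal over a Polish space, existential over $(2^\mathbb{N})^{<\omega}$, Borel matrix). Closure under subsets is immediate, closure under finite unions follows by concatenating the two witnessing finite tuples, and properness ($\mathbb{N}\notin \I$) follows from the ``moreover'' part of Lemma \ref{psi} applied at $x=\mathbb{N}$: the infinite remainder $z$ disjoint from all $\psi(\mathbb{N},y)$ witnesses that $\mathbb{N}$ is not almost covered by any finite subfamily of the AD family $\{\psi(\mathbb{N},y):y\in 2^\mathbb{N}\}$.

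For tallness, given an infinite $a$, I would look for some canonical infinite subset of $a$ that provably lies in $\I$. The natural candidates are fibres $\psi(a,y_0)$, but it is not automatic that these land in $\I$ for every $x$-test: if one chooses $x$ so that $\psi(a,y_0)\cap x$ falls into the remainder $z_x$ of $\psi(x,\cdot)$, the covering condition fails. Some case analysis on the relative position of $x$ with respect to $a$, plus an honest use of the AD structure, seems required; if the naive choice breaks, the definition of $\I$ would need adjustment (for instance restricting the universal quantifier to a carefully chosen family of $x$'s, or iterating $\psi$).

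For the dodging property, given $x\in\I^+$ and $\F\subseteq\finN$, I would apply Galvin's theorem (Theorem \ref{Galvin}) \emph{inside} the remainder set $z\subseteq x$ produced by Lemma \ref{psi}, obtaining an infinite $y\subseteq z\subseteq x$ with $y\in hom(\F)$. The disjointness of $z$ from all fibres $\psi(x,\cdot)$ is the engine that forces $y\in\I^+$: any attempt to almost-cover $y=y\cap x$ by a finite subfamily of $\psi(x,\cdot)$ fails by construction of $z$, so the defining $\mathbf{\Pi}^1_2$ condition is violated at the particular test $x$.

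The main obstacle I anticipate is synchronizing all three requirements simultaneously within a single $\mathbf{\Pi}^1_2$ formula: the naive finite-covering definition above risks killing tallness, while strengthening it to secure tallness risks losing either properness or the dodging property. I expect the actual proof to require a more delicate use of $\psi$ (perhaps iterating it, or pairing it with a hierarchical enumeration of potential $hom(\F)$ threats), and that getting the quantifier block to come out exactly $\forall\exists$-Borel---no worse---is where the real work of the theorem sits.
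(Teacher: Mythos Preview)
Your proposed ideal is exactly $\fin$, hence not tall: for any infinite $a$, test the defining condition at $x=a$. Then $a\cap x=a$, and the ``moreover'' clause of Lemma \ref{psi} hands you an infinite $z\subseteq a$ disjoint from every $\psi(a,y)$, so no finite union of fibres almost covers $a$. Thus every infinite set is $\I$-positive and the dodging property is vacuous. You anticipated trouble with tallness, but the failure is structural rather than a matter of case analysis: any ideal defined by ``$a\cap x$ is almost covered by finitely many fibres of $\psi(x,\cdot)$ for \emph{all} $x$'' will collapse at $x=a$, regardless of how you choose candidate subsets of $a$.

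The paper's construction runs in the opposite direction: it builds the \emph{positive} part $\mathcal{H}=\I^+$ as a $\mathbf{\Sigma}^1_2$ set and takes $\I$ to be its complement. Fix a continuous surjection $\varphi:\cantor\to 2^{\finN}$ and a $\mathbf{\Pi}^1_1$ uniformization $R^*$ of the relation $R(\F,x,y)\Leftrightarrow y\subseteq x\ \&\ y\in hom(\F)$ from Observation \ref{uniformR}. Then set $\mathcal{C}_1=\{y:\exists x\ R^*(\varphi(x),\psi(\mathbb{N},x),y)\}$ and inductively $\mathcal{C}_{n+1}=\{y:\exists x\ \exists z\in\mathcal{C}_n\ R^*(\varphi(x),\psi(z,x),y)\}$; finally $\mathcal{H}=\{x:\exists n\ \exists y\in\mathcal{C}_n\ y\subseteq^* x\}$. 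The point is that $\psi$ is used to split each $z\in\mathcal{C}_n$ into an almost disjoint family indexed by \emph{all} colourings simultaneously, and the $\mathbf{\Pi}^1_1$ uniformization picks, inside each fibre $\psi(z,x)$, a $\varphi(x)$-homogeneous set to throw into $\mathcal{C}_{n+1}$. The dodging property then follows by design: given $z\in\mathcal{H}$ and $\F=\varphi(x)$, the set $w$ with $R^*(\F,\psi(z,x),w)$ is already in the next $\mathcal{C}_{n+1}\subseteq\mathcal{H}$. That $\I$ is a tall ideal is inherited from the argument of \cite[Theorem 4.7]{HMTU2017}. Your instinct that iteration and a hierarchical enumeration of $\F$'s is needed was right, but the missing ingredient is the $\mathbf{\Pi}^1_1$ uniformization of the $hom$ relation, which is what keeps each $\mathcal{C}_n$ at $\mathbf{\Sigma}^1_2$ and hence $\I$ at $\mathbf{\Pi}^1_2$.
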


\proof The construction is similar to  that  presented in \cite[Theorem 4.7]{HMTU2017}. We will  sketch  the argument below. Let  $\varphi:\cantor\rightarrow 2^{\finN}$ be a continuous
surjection.  By the classical uniformization theorem \cite{Kechris94}, let $R^*\subseteq R$ be a $\mathbf \Pi^1_1$  uniformization for the relation $R$ given by \ref{uniformR}.  Let $\psi$ be given by Lemma \ref{psi}. 
Let
\[
\mathcal{C}_1 = \{y\in \infiN: \; \exists x\in \cantor, R^*(\varphi(x), \psi(\mathbb{N}, x), y)\},
\]
\[
\mathcal{C}_{n+1} = \{y\in \infiN: \exists x\in \cantor, \exists z\in \mathcal{C}_n, R^*(\varphi(x), \psi(z, x), y)\}.
\]
Then  each $\mathcal{C}_n$ is $\bf\Sigma^1_2$. Finally,
let
\[
x\in \mathcal{H}\Leftrightarrow \;(\exists n\in \mathbb{N})\; (\exists y\in
\mathcal{C}_n) \; y\subseteq^* x.
\]
The proof of Theorem 4.7 in \cite{HMTU2017} shows that  $\I=\mathcal{P}(\mathbb{N})\setminus\mathcal H$ is
a tall ideal. We will show that it satisfies the other requirements.   It is clearly $\mathbf\Pi_2^1$.  Let $\F\subseteq\finN$ and $y\not\in \I$. Then there is $x\in \cantor$ such that $\F=\varphi(x)$. There is also $n\in \mathbb{N}$ and  $z\in \mathcal{C}_n$ so that  $z\subseteq^* y$.  Let $w$ be such that 
$R^*(\varphi(x), \psi(z, x), w)$. Then $w\subseteq z$ and is $\F$-homogeneous.  By definition, $w\in \mathcal{H}$. Then $w\cap y$ is infinite and $\F$-homogeneous.

The last claim follows from Lemma \ref{rep-of-closed}.
\endproof

A corollary of the proof of the previous theorem  provides a more general construction of co-analytic tall ideals as in \cite{HMTU2017}. 
 
\begin{theorem}Let $\B$ be a front over $\mathbb{N}$. There is a co-analytic tall ideal $\I$ such that $hom(\F)\not\subseteq \I$ for all $\F\subseteq \B$. 
\end{theorem}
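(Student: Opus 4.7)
The plan is to imitate the proof of Theorem~\ref{noclosedtall}, but exploit the front hypothesis to replace the $\mathbf\Pi^1_1$-uniformization $R^*$ used there by a genuinely \emph{Borel} selector. The point is that Corollary~\ref{uniform-galvin} furnishes a Borel map $U:2^{\B}\times\infiN\to\infiN$ such that $U(\F,x)$ is an infinite $\F$-homogeneous subset of $x$ whenever $\F\subseteq\B$ and $x$ is infinite. This Borel selection is exactly what fails for arbitrary $\F\subseteq\finN$ (by the previous subsection), and the failure is precisely what forced the $\mathbf\Pi^1_2$ bound in Theorem~\ref{noclosedtall}; restricting to a fixed front saves one projective level.

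First fix a continuous surjection $\varphi:\cantor\to 2^{\B}$ (as $\B$ is countably infinite, $2^\B$ is a Cantor space) and recall $\psi$ from Lemma~\ref{psi}. Imitating the construction of Theorem~\ref{noclosedtall}, define recursively
\[
\C_1=\{U(\varphi(x),\psi(\mathbb{N},x)):\;x\in\cantor\},
\]
\[
\C_{n+1}=\{U(\varphi(x),\psi(z,x)):\;x\in\cantor,\;z\in\C_n\}.
\]
Since $\varphi,\psi$ are continuous and $U$ is Borel, an easy induction shows that each $\C_n$ is analytic: $\C_{n+1}$ is the image of the analytic set $\cantor\times\C_n$ under a Borel map. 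Set
\[
y\in\mathcal H \iff \exists n\in\mathbb{N}\,\exists z\in\C_n\ z\subseteq^*y;
\]
then $\mathcal H$ is analytic, so $\I=\mathcal{P}(\mathbb{N})\setminus\mathcal H$ is co-analytic.

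The verification that $\I$ is a tall ideal carries over verbatim from Theorem~4.7 of \cite{HMTU2017}, which is exactly the verification used for Theorem~\ref{noclosedtall}: the almost-disjointness and the last clause of Lemma~\ref{psi} guarantee that $\mathcal H$ is upward closed modulo finite and satisfies the coideal union property, while $\C_1$ alone witnesses tallness. The final clause $hom(\F)\not\subseteq\I$ is immediate from the construction: given $y\notin\I$ pick $n$ and $z\in\C_n$ with $z\subseteq^*y$, choose $x\in\cantor$ with $\varphi(x)=\F$, and set $w=U(\F,\psi(z,x))$. Then $w\in\C_{n+1}\subseteq\mathcal H$ and $w\subseteq\psi(z,x)\subseteq z$, so $w\cap y$ is an infinite $\F$-homogeneous subset of $y$ lying in $\mathcal H$.

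The only real obstacle is conceptual: recognising that the quantifier ``there exists an $\F$-homogeneous witness'', when $\F$ is constrained to a subset of a fixed front $\B$, is no longer a true $\mathbf\Pi^1_1$ existential but is resolved by a Borel choice function coming from Section~\ref{uniformRamsey}. Once this substitution is made in the blueprint of Theorem~\ref{noclosedtall}, every projective class drops by exactly one level and the construction produces a co-analytic ideal with the required property.
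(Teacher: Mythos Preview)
Your proposal is correct and takes essentially the same approach as the paper: replace the $\mathbf\Pi^1_1$-uniformization $R^*$ in the proof of Theorem~\ref{noclosedtall} by the Borel selector $U$ supplied by Corollary~\ref{uniform-galvin}, so that each $\C_n$ becomes analytic rather than $\mathbf\Sigma^1_2$ and the resulting ideal drops from $\mathbf\Pi^1_2$ to co-analytic. Your write-up in fact supplies more detail than the paper's one-line sketch, including the explicit verification that $hom(\F)\cap\I^+\neq\emptyset$ for every $\F\subseteq\B$.
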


\proof  From the proof of Theorem \ref{noclosedtall} and using  Corollary \ref{uniform-galvin} instead of the co-analytic uniformizing set $R^*$, we define the sets  $\,\mathcal{C}_n$,  which now are analytic. Thus the ideal constructed is co-analytic. 
\endproof 

\begin{question} Does every analytic tall ideal $\I$ contain a $\fsig$
tall ideal? (M. Hrusak).
	
A weaker version of the previous question is for which tall families $\C$  there is $\mathcal{F}\subseteq [\mathbb{N}]^{<\omega}$ such that $hom(\mathcal{F})\subseteq \C$ (here $hom(\mathcal{F})$ is not necessarily  closed).
\end{question}

\section{Examples of  tall families with a Borel selector}

We present some examples showing that the search for a Borel selector can be reduced, in some instances, to find an appropriated coloring. 

%
%
%
%

\begin{ejem}
\label{enumerating-function-2}
	
Let $e:\infiN\rightarrow \baire$ be the increasing  enumeration function, i.e.  $e(x)(n)$ is the nth element of $x$ in its natural order. Notice that $e$ is continuous.  Let
$\gamma:\infiN\times\infiN\rightarrow \infiN$ be given by
\[
\gamma(x,y)=\{e(x)(n):\; n\in y\}.
\]
Notice that $\gamma(x,y)\subseteq x$ and  $\gamma$ is continuous. For each $y\in \infiN$, let
\[
\C_y=\{\gamma(x,y):\; x\in \infiN\}.
\]
Then $\C_y$ is a tall family and obviously $S(x)=\gamma(x,y)$ is a  Borel selector for $\C_y$.
	
We will show that $\C_y$ contains $hom(c)$ for some coloring of pairs $c$. Let $(y_n)_n$ be the increasing enumeration of $y$. We assume that $y_0\geq 1$. If $(z_n)_n$ is the increasing enumeration of an infinite  set $z$, then
\[
z\in \C_y \;\;\Leftrightarrow\;\; (\forall n) (y_{n+1}-y_{n}\leq
z_{n+1}-z_{n}) \;\&\; y_0\leq z_0
\]
Consider the following coloring:
\[
c\{k,l\}=0\;\;\;\mbox{iff}\;\;\; l-k\geq y_k \;\&\;\; k\geq y_0.
\]
Then any $c$-homogeneous infinite set is necessarily $0$-homogeneous. Let $h=\{h_n:\;n\in\mathbb{N}\}\in hom(c)$. Then $h_0\geq b_0\geq 1$ and $h_{k}\geq k+1$. Hence $h_{k+1}-h_k\geq b_{h_k}\geq b_{k+1}\geq b_{k+1}-b_k$. Thus $h\in \C_y$.
\end{ejem}

\begin{question}
{\em Let $K\subseteq\infiN\times\infiN$ be a closed set without a Borel
uniformization (see \cite{Kechris94}). Consider the
following family:
\[
\C=\{\gamma(x, y):\; (x,y)\in K\}.
\]
Since the projection of $K$ is $\infiN$, then  $\C$ is tall
and, by definition, is analytic. We do not know if $\C$ has a
Borel selector.}
\end{question}

\medskip

\begin{ejem}
\label{tall given by ideals}	Let $WO(\mathbb{Q})$ be the collection of all well-ordered subsets of $\mathbb{Q}$ respect the usual order. Let  $WO(\mathbb{Q})^*$  the collection of well ordered subsets of $(\mathbb{Q},<^*)$ where $<^*$ is the reversed order of the usual order of $\mathbb{Q}$. Let ${\C}= WO(\mathbb{Q})\cup WO(\mathbb{Q})^*$. Notice that $\C$ is a complete co-analytic set. To see that $\C$ has a Borel selector, let $c:[\omega]^{2}\rightarrow 2$ given by $c\{r_n,r_m\}= 0$ iff $n<m$, where $(r_n)_n$ is any fixed enumeration of $\mathbb{Q}$. Then $hom(c)\subseteq\C$ and the result follows from corollary \ref{efective-Ramsey}.  
\end{ejem}
\medskip

Let $K$ be a sequentially compact space and $(x_n)_n$ be a
sequence on $K$. Let
\[
\C(x_n)_n= \{y\subseteq \mathbb{N}:\; (x_n)_{n\in y}\;\mbox{is convergent}\}.
\]
Then $\C(x_n)_n$ is tall.

\begin{theorem}
\label{hom-cauchy-sequences}
Let $(x_n)_n$ be dense in a compact metric space $X$. There is a coloring $c$ such that $hom(c)\subseteq \C(x_n)_n$, thus $\C(x_n)_n$  has a Borel selector.
\end{theorem}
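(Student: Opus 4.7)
The plan is to exhibit a Borel $2$-coloring $c\colon [\mathbb{N}]^3 \to 2$ whose every infinite monochromatic set indexes a Cauchy (hence convergent) subsequence of $(x_n)_n$, and then to invoke the uniform Ramsey theorem for triples (Corollary \ref{efective-Ramsey}) to extract such a subset in a Borel way. Density of $(x_n)_n$ plays no role in this argument; only compactness of $X$ is used.

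First, using that $X$ is compact metric and hence totally bounded, I would fix for each $k\in\mathbb{N}$ a finite Borel partition $X=P_{k,1}\sqcup\cdots\sqcup P_{k,N_k}$ with every piece of diameter $<1/k$. I then define
$$c\{i,j,k\}=0\quad\Longleftrightarrow\quad i<j<k\ \text{and}\ x_j,\,x_k\ \text{belong to the same piece of}\ P_i.$$

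The main verification is that every infinite $c$-homogeneous $y=\{y_0<y_1<\cdots\}$ belongs to $\C(x_n)_n$. If $c\equiv 0$ on $[y]^3$, then for any triple $y_\ell<y_i<y_j$ in $y$ one has $d(x_{y_i},x_{y_j})<1/y_\ell$, so letting $\ell$ grow shows $(x_n)_{n\in y}$ is Cauchy and, by compactness, convergent. If instead $c\equiv 1$ on $[y]^3$, then fixing the triple's first coordinate as $y_0$ the infinitely many points $x_{y_1},x_{y_2},\ldots$ would have to lie in pairwise distinct pieces of $P_{y_0}$, which is impossible since $P_{y_0}$ has only $N_{y_0}$ pieces. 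Hence the second alternative is incompatible with $y$ being infinite, and every infinite homogeneous set indexes a convergent subsequence.

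Finally, applying Corollary \ref{efective-Ramsey} with $n=3$ to $\F:=c^{-1}(0)\subseteq[\mathbb{N}]^3$ produces a Borel map that sends each infinite $x\in 2^{\mathbb{N}}$ to an infinite subset of $x$ that is homogeneous for $\F$; by the preceding paragraph this subset is automatically $0$-homogeneous and lies in $\C(x_n)_n$, so this map is the desired Borel selector. I do not expect a genuine obstacle; the only mildly delicate step is the pigeonhole argument ruling out the $1$-homogeneous alternative, but it uses only $|y|>N_{y_0}$ and so relativizes without change from $\mathbb{N}$ to any infinite $x\subseteq\mathbb{N}$.
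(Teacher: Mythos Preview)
Your argument is correct. Both alternatives for the homogeneous color are handled properly: the pigeonhole step ruling out $1$-homogeneity is sound (an infinite $y$ would force infinitely many points into the $N_{y_0}$ pieces of $P_{y_0}$), and the $0$-homogeneous case yields a Cauchy sequence exactly as you describe. The appeal to Corollary~\ref{efective-Ramsey} with $n=3$ then gives the Borel selector. You are also right that density is irrelevant.

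The paper takes a different route, using a coloring of \emph{pairs} rather than triples. It fixes a continuous surjection $f\colon 2^{\mathbb{N}}\to X$, lifts each $x_n$ to some $y_n\in 2^{\mathbb{N}}$, and defines the Sierpi\'nski-type coloring $c\{n,m\}_<=1$ iff $y_n\preceq y_m$ in the lexicographic order on $2^{\mathbb{N}}$. Any $c$-homogeneous set then enumerates a $\preceq$-monotone sequence in $2^{\mathbb{N}}$, which converges (since $(2^{\mathbb{N}},\preceq)$ is order-complete and compact), and pushing forward by $f$ gives convergence in $X$. The paper's approach buys a lower-dimensional coloring and avoids choosing the auxiliary partitions $P_k$, at the cost of invoking the Cantor surjection and the order-theoretic structure of $2^{\mathbb{N}}$. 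Your approach is more elementary and self-contained---it uses only total boundedness directly---but requires coloring triples rather than pairs. Both yield the conclusion equally well.
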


\proof Let $f:\cantor \rightarrow X$ be a continuous surjection.
Let $y_n\in\cantor$ such that $f(y_n)=x_n$ for each $n\in \omega$.
Let $\preceq$ be the usual lexicografic order on $\cantor$. Consider the
Sierpinsky coloring $c\{n,m\}_< = 1$ iff $y_n\preceq y_m$. Then
for any $h\in hom(c)$, $(y_n)_{n\in h}$ is a monotone
sequence in $\cantor$ and therefore it is convergent and so is
$(x_n)_{n\in h}$. Hence $hom(c)\subseteq \C(x_n)_n$.

\qed

Now we will look at the ideal of nowhere sets in countable spaces. 

\begin{theorem} Let $(X,\tau)$ be a regular space without isolated points over a countable set $X$. There is a coloring $c: [X]^{2}\rightarrow 2$ such that $hom(c)\subseteq nwd(X,\tau)$  and thus $nwd(X,\tau)$ has a Borel selector. 
\end{theorem}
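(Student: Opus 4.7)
The plan is to produce a $2$-coloring $c\subseteq [X]^2$ with $hom(c)\subseteq nwd(X,\tau)$ and then to invoke Corollary \ref{efective-Ramsey} (with $n=2$ and $\mathcal{F}=c$) in order to obtain a Borel selector for $hom(c)$, which is automatically a Borel selector for $nwd(X,\tau)$.

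The combinatorial input will be a Cantor scheme of clopen sets. Since $X$ is countable, regular and $T_1$, it is second countable, zero-dimensional and metrizable; indeed, by Sierpi\'nski's classical theorem $X$ is homeomorphic to $\mathbb{Q}$. Fixing a countable base of clopen sets, a standard bookkeeping induction produces a family $\{V_s:\,s\in 2^{<\omega}\}$ of clopen subsets of $X$ such that $V_\emptyset=X$, the pair $\{V_{s^\frown 0},V_{s^\frown 1}\}$ is a partition of $V_s$ into two nonempty clopen sets (possible since every $V_s$, being a nonempty clopen subset of a space without isolated points, is infinite and admits a nontrivial clopen splitting), and every nonempty open subset of $X$ contains some $V_s$.

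For $x\neq y$ in $X$, let $s(x,y)\in 2^{<\omega}$ be the unique maximal $s$ with $\{x,y\}\subseteq V_s$, and set
\[
c\{x,y\}:=|s(x,y)|\bmod 2.
\]
To check that $hom(c)\subseteq nwd(X,\tau)$, suppose that $A\subseteq X$ is infinite and is not nowhere dense. Then $\overline{A}$ has nonempty interior and, by the refinement property, contains some $V_t$; hence $A$ is dense in $V_t$. Therefore $A$ meets both $V_{t^\frown 0}$ and $V_{t^\frown 1}$, yielding a pair in $[A]^2$ of colour $|t|\bmod 2$. The same argument applied to the open set $V_{t^\frown 0}$, in which $A$ is also dense, produces a pair in $[A]^2$ of colour $(|t|+1)\bmod 2$. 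Both colours occur, contradicting the monochromaticity of $A$.

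Feeding $c$ into Corollary \ref{efective-Ramsey} then gives a Borel map $x\mapsto S(c,x)$ that selects, for every infinite $x\subseteq X$, an infinite $c$-homogeneous (hence nowhere dense) subset of $x$, which is the desired Borel selector for $nwd(X,\tau)$. The only delicate step is the simultaneous bookkeeping in the Cantor-scheme construction: keeping both immediate children nonempty (using the absence of isolated points to guarantee that each $V_s$ is infinite and clopen-splittable) while ensuring the family refines the fixed countable base of clopens.
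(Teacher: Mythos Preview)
Your argument hinges on the claim that a countable regular $T_1$ space without isolated points is second countable (and hence homeomorphic to $\mathbb{Q}$), but this is false. For a concrete counterexample, take $X=(\omega\times\mathbb{Q})\cup\{\infty\}$, where each $\{n\}\times\mathbb{Q}$ is clopen and carries the usual topology, and basic neighbourhoods of $\infty$ are $\{\infty\}\cup\bigcup_{n\in F}(\{n\}\times\mathbb{Q})$ for $F$ ranging over a free filter $\mathcal{F}$ on $\omega$ that is not countably generated. This space is countable, zero-dimensional (hence regular), crowded, yet not even first countable at $\infty$. Without a countable base you cannot run the bookkeeping that makes ``every nonempty open set contains some $V_s$'', and that refinement property is exactly what your density argument (finding pairs of both colours inside a non-nwd set) uses.

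The gap is not repaired by dropping refinement and asking only that the scheme separate points. Your parity coloring then yields homogeneous sets that are nowhere dense for the coarser topology $\rho$ generated by the $V_s$'s, but $\rho$-nwd does not imply $\tau$-nwd when $\rho\subsetneq\tau$ (indeed, your colour-$0$ homogeneous sets need not even be $\rho$-discrete: in $2^\omega$ the set $\{0^{2n}10^\infty:n\in\omega\}\cup\{0^\infty\}$ splits only at even levels but accumulates at $0^\infty$). The paper sidesteps this by using the Sierpi\'nski coloring pulled back along a continuous injection into $\mathbb{R}$ (equivalently, along a coarser second-countable topology $\rho$). Sierpi\'nski-homogeneous sets are monotone, hence \emph{discrete} in $\rho$; discreteness, unlike nowhere-density, passes upward from $\rho$ to the finer $\tau$, and $\tau$-discrete sets in a crowded $T_1$ space are $\tau$-nwd. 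If you keep your Cantor scheme but only require it to separate points and then replace the parity coloring by a Sierpi\'nski-type coloring coming from the induced embedding $X\hookrightarrow 2^\omega$, the argument goes through along the paper's lines.
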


\proof The Sierpinski coloring $c$ on $[\mathbb{Q}]^{2}$ satisfies that $hom(c)\subseteq nwd(\mathbb{Q})$. Notice that every $c$-homogeneous set is a discrete subset of $\mathbb{Q}$.  Let $(V_n)_n$ be a countable collection of $\tau$-open sets that separates points. Let $\rho$ be the topology generated by the $V_n$'s.  Then $(X,\rho)$ is homeomorphic to $\mathbb{Q}$. Therefore the Sierpinski coloring  on $\mathbb{Q}$ can be defined on $[X]^{2}$ such that every
$c$-homogeneous set is a $\rho$-discrete subset of $X$. Since $\rho\subseteq \tau$, then $hom(c)\subseteq nwd(X,\tau)$.
\endproof

{\bf Acknowledgment:} The second author would like to thank Stevo Todor\v{c}evi\'c for the discussions we had years ago about some of the results presented in this article and for point out to the paper \cite{Solovay78}.


\begin{thebibliography}{1}


\bibitem{avigad}
J. Avigad.
\newblock An effective proof that open sets are {R}amsey.
\newblock {\em Arch. Math. Logic}, 37(4):235--240, 1998.


\bibitem{GalvinPrikry}
F.~Galvin and K.~Prikry.
\newblock Borel sets and {R}amsey's theorem.
\newblock {\em J. Symbolic Logic}, 38:193--198, 1973.


\bibitem{Gao}
S.~Gao.
\newblock Invariant Descriptive Set Theory.
\newblock {\em Pure and Applied Mathematics}, 293. Taylor \& Francis Group, 2009. ISBN-13: 978-1-58488-793-5.


\bibitem{GrebikHrusak2017}
J.~Greb\'ik and M.~Hru\u{s}\'ak.
\newblock No minimal tall {B}orel ideal in the {K}at{\v{e}}tov order.
\newblock {\em arxiv.org/pdf/1708.05322.pdf}, 2017.

\bibitem{HMTU2017}
M.~Hru\u{s}\'ak, D.~Meza-Alc\'antara, E.~Th\"{u}mmel, and C.~Uzc\'ategui.
\newblock {R}amsey type properties of ideals.
\newblock {\em Annals of Pure and Applied Logic}, 168(11):2022--2049, 2017.

\bibitem{Kechris94}
A.~S. Kechris.
\newblock {\em Classical Descriptive Set Theory}.
\newblock Springer-Verlag, 1994.

\bibitem{Mathias77}
A.R.D. Mathias.
\newblock Happy families.
\newblock {\em Ann. Math. Log.}, 12:59--11, 1977.

\bibitem{mazur}
K. Mazur, $F_{\sigma}$-ideals and $\omega_1\omega_1^*$-gaps in the Boolean algebras $P(\omega)/I$, \emph{Fundamenta Mathematicae}, \textbf{138}(2): 103--111, 1991.

\bibitem{nash-williams68}
C.~St. J.~A. Nash-Williams.
\newblock On better-quasi-ordering transfinite sequences.
\newblock {\em Mathematical Proceedings of the Cambridge Philosophical
  Society}, 64(2):273--290, 1968.

\bibitem{Solovay78}
R. Solovay.
\newblock Hyperarithmetically encodable sets.
\newblock {\em Trans. Amer. Math. Soc.}, 239:99--122, 1978.

\bibitem{Todor2006}
S.~Todor{\v{c}}evi{\'c}.
\newblock Higher dimensional {R}amsey theory.
\newblock In {\em {Ramsey methods in analysis}}, Advanced courses in
  mathematics, CRM Barcelona. Birkh{\"a}user, 2005.

\bibitem{Todor2010}
S.~Todor\v{c}evi\'c.
\newblock {\em Introduction to Ramsey spaces}.
\newblock Annals of Mathematical Studies 174. Princeton University Press, 2010.

\end{thebibliography}
\end{document}